\documentclass[12pt]{article}
\usepackage[hidelinks]{hyperref} 
\usepackage{amsthm}
\usepackage{amsmath}
\usepackage{enumitem}
\usepackage{amssymb}
\usepackage{multicol}
\usepackage{geometry}
\usepackage{mathtools}

\usepackage{thmtools}
\usepackage{thm-restate}

\usepackage{MnSymbol}
\usepackage[all]{xy}

\title{Characterizing relative decidability\\in terms of model completeness}
\author{Matthew Harrison-Trainor and Liam Tan\thanks{Harrison-Trainor was partially supported by a Sloan Research Fellowship and by the National Science Foundation under Grant DMS-2452105 and DMS-2419591/DMS-2153823. Liam Tan was supported as an REU student by the National Science Foundation under grant DMS-2419591.}}

\newcommand{\cB}{\mathcal{B}}
\newcommand{\cD}{\mathcal{D}}
\newcommand{\cC}{\mathcal{C}}

\newcommand{\cL}{\mathcal{L}}
\newcommand{\cM}{\mathcal{M}}
\newcommand{\cA}{\mathcal{A}}

\newcommand{\cN}{\mathcal{N}}

\DeclareMathOperator\res{\upharpoonright}

\DeclareMathOperator{\Th}{Th}

\newcommand{\barx}{\bar{x}}
\newcommand{\bary}{\bar{y}}

\newcommand{\bara}{\bar{a}}
\newcommand{\barc}{\bar{c}}
\newcommand{\barb}{\bar{b}}

\DeclareMathOperator{\Diag}{Diag}
\newcommand{\set}[1]{\{ #1 \}}

\newcommand{\vphi}{\varphi}
\newcommand{\abs}[1]{\left| #1 \right|}
\newcommand{\lra}{\leftrightarrow}

\newcommand{\mc}[1]{\mathcal{#1}}

\DeclareMathOperator{\flim}{Flim}
\DeclareMathOperator{\age}{Age}

\newtheorem{theorem}{Theorem}[section]
\newtheorem{lemma}[theorem]{Lemma}

\newcounter{claims}[theorem]
\newtheorem{claim}[claims]{Claim}

\newtheorem*{subclaim}{Subclaim}

\theoremstyle{definition}
\newtheorem{definition}[theorem]{Definition}

\newtheorem{question}[theorem]{Question}

\newcounter{cases}[theorem]
\newtheorem{case}[cases]{Case}

\theoremstyle{remark}

\newcommand{\treeT}{\mathfrak{T}}

\begin{document}
	
	\maketitle
	
	\begin{abstract}
		A theory $T$ is said to be relatively decidable if for every model of $T$, one can compute the elementary diagram of that model from its atomic diagram together with $T$. We verify a conjecture of Chubb, Miller, and Solomon by showing that for complete theories $T$, $T$ is relatively decidable if and only if $T$ has a conservative model complete extension of the form $T \cup \{\varphi(\bar{c})\}$ where $T \models \exists \bar{x} \; \varphi(\bar{x})$. We also show that no such characterization works for incomplete theories.
	\end{abstract}
	
	\section{Introduction}
	
	In applied model theory, especially in the style of Robinson, one of the main tasks when studying a particular class of structures is to understand the definable sets. The strongest way to do this is to prove a quantifier elimination result, which says that every definable set can be defined by a formula without quantifiers. Chevalley's theorem on constructible sets is exactly quantifier elimination for algebraically closed fields, while work of Seidenberg, Robinson, and Blum established quantifier elimination for differentially closed fields in characteristic zero (see \cite{diff}). Many natural structures do not have quantifier elimination in their natural languages, but instead satisfy a weaker variant of quantifier elimination known as model completeness: Every definable set can be defined by both an existential formula and a universal formula. A classical example due to Tarski \cite{tarski} is the field of real numbers, which admits quantifier elimination in the language of ordered fields but is only model complete in the algebraic language without the ordering. Other examples of model completeness without quantifier elimination include the field $\mathbb{Q}_p$ of $p$-adic numbers \cite{padic}, the exponential field $(\mathbb{R},\exp)$ of real numbers \cite{exp}, and algebraically closed fields with a generic automorphism \cite{difference}.
	
	One consequence of model completeness is that it yields an algorithm reducing the problem of computing membership in definable sets to membership in quantifier-free definable sets. Such an algorithm might be thought of as the weakest possible variant of quantifier elimination. Our main result is that for complete theories the existence of such an algorithm implies model completeness after naming finitely many constants. This perhaps explains the ubiquity of model completeness.
	
	More formally, let $\mc{L}$ be a computable first-order language and $T$ a recursively axiomatizable first-order theory. If $T$ is model complete, then every formula $\varphi(\bar{x})$ is equivalent modulo $T$ to both an existential and a universal formula. For every $\mc{A} \models T$ (with domain $\mathbb{N}$, so that it makes sense to perform computations on $\mc{A}$) we can use the atomic diagram of $\mc{A}$ to compute the elementary diagram of $\mc{A}$. Indeed, given a formula $\varphi(\bar{x})$ and $\bar{a} \in \mc{A}$, we can find quantifier-free formulas $\psi_{\mathsf{T}}(\bar{x},\bar{y})$ and $\psi_{\mathsf{F}}(\bar{x},\bar{z})$ such that
	\[ T \models \varphi(\bar{x}) \longleftrightarrow \exists \bar{y} \; \psi_{\mathsf{T}}(\bar{x},\bar{y}) \quad \text{ and } T \models \neg \varphi(\bar{x}) \longleftrightarrow \exists \bar{z} \;  \psi_{\mathsf{F}}(\bar{x},\bar{z}).\]
	Since the theory is c.e., we can search for these formulas computably. Then, using the atomic diagram of $\mc{A}$, we can simultaneously search for either $\bar{b} \in \mc{A}$ such that $\mc{A} \models \psi_{\mathsf{T}}(\bar{a},\bar{b})$ (in which case we have determined that $\mc{A} \models \varphi(\bar{a})$) or for $\bar{c} \in \mc{A}$ such that $\mc{A} \models \psi_{\mathsf{F}}(\bar{a},\bar{c})$ (in which case we have determined that $\mc{A} \models \neg \varphi(\bar{a})$).
	
	Chubb, Miller, and Solomon \cite{ChubbMillerSolomon} named this property of $T$ relative decidability. While they defined this only for recursively axiomatizable theories, we remove this assumption by modifying the definition to allow access to the theory $T$ in the computations.
	
	\begin{definition}
		A theory $T$ is relatively decidable if for every countable $\mc{A} \models T$ with domain $\mathbb{N}$ the theory $T$ together with the atomic diagram of $\mc{A}$ compute the elementary diagram of $\mc{A}$.
	\end{definition}
	
	\noindent If $T$ is model complete, then not only is $T$ relatively decidable, but it is uniformly relatively decidable: There is a single algorithm (or Turing functional) that can always be used to compute the elementary diagram of a model from the atomic diagram. Chubb, Miller, and Solomon \cite{ChubbMillerSolomon} showed that these are the only uniformly relatively decidable theories, i.e., that $T$ is uniformly relatively decidable if and only if it is model complete. This result holds for both complete and incomplete theories.
	
	Chubb, Miller, and Solomon provided several examples of natural theories that are relatively decidable but not uniformly relatively decidable, including the theory of $(\mathbb{N},S)$ where $S$ is the successor relation. The non-uniformity comes from the fact that in each model of $\Th(\mathbb{N},S)$ we must identify the unique element that has no predecessor. In particular,
	\[ \Th(\mathbb{N},S) \models \exists x \forall y \; S(y) \neq x\]
	and in the language $\{S,0\}$ with an additional constant, $\Th(\mathbb{N},S) \cup \{\forall y \; S(y) \neq 0 \}$ is model complete. Chubb, Miller, and Solomon \cite{ChubbMillerSolomon} conjectured that one can characterize relative decidability in this form.
	
	Our main result is a confirmation of this conjecture when the theory $T$ is complete.\footnote{Several years ago the first author circulated an incorrect refutation of this conjecture. He apologizes.}
	
	\begin{restatable}{theorem}{maintheorem}\label{thm:main}
		Let $T$ be a complete theory. Then $T$ is relatively decidable if and only if there is a formula $\varphi(\bar{x})$ such that
		\begin{enumerate}
			\item $T \models \exists \bar{x} \; \varphi(\bar{x})$, and
			\item the $\mc{L} \cup \{\bar{c}\}$-theory $T \cup \{\varphi(\bar{c})\}$ is model complete.
		\end{enumerate}
	\end{restatable}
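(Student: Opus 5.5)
The plan is to prove the easy direction by a nonuniform version of the model-completeness algorithm from the introduction, and the hard direction by contraposition, building one model that defeats every Turing functional.

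\emph{Right to left.} Suppose $\varphi(\bar{x})$ satisfies (1) and (2), and let $\mc{A}\models T$ have domain $\mathbb{N}$. By (1), fix (nonuniformly) a tuple $\bar{a}\in\mc{A}$ with $\mc{A}\models\varphi(\bar{a})$; then $(\mc{A},\bar{a})\models T\cup\{\varphi(\bar{c})\}$. Since $T$ is complete, $T\cup\{\varphi(\bar{c})\}\models\psi(\bar{c})$ iff $\forall\bar{x}\,(\varphi(\bar{x})\to\psi(\bar{x}))\in T$, so $T$ computes the consequences of $T\cup\{\varphi(\bar{c})\}$. Given an $\mc{L}$-formula $\theta(\bar{y})$, search $T$ for existential formulas (with parameters $\bar{c}$) equivalent to $\theta$ and to $\neg\theta$. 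These exist by (2). Then evaluate them in $\mc{A}$ with $\bar{c}\mapsto\bar{a}$ by searching for witnesses in the atomic diagram, exactly as in the introduction.

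\emph{Left to right.} Assume that for no $\varphi$ with $T\models\exists\bar{x}\,\varphi(\bar{x})$ is $T\cup\{\varphi(\bar{c})\}$ model complete. I will build, by a finite-extension (Henkin-style forcing) construction, a single model $\mc{A}\models T$ with domain $\mathbb{N}$ such that no functional $\Phi_e$ computes $\Diag_{el}(\mc{A})$ from $T\oplus D(\mc{A})$.
\begin{itemize}
\item \emph{Conditions.} A condition is a formula $\varphi(\bar{a})$ about finitely many named elements $\bar{a}$ with $T\models\exists\bar{x}\,\varphi(\bar{x})$. We always take $\varphi$ to include the finitely many atomic facts decided so far.
\item \emph{Henkin steps.} These are interleaved so that the limit is a model of $T$: we decide each atomic fact, and we add witnesses for existential statements of the form $\exists y\,\psi$ that have been put into the condition. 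Since $T$ is complete, each step preserves consistency with $T$.
\item \emph{Requirement steps.} At step $e$ we handle $\Phi_e$.
\end{itemize}

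\emph{The requirement step.} Given a condition $\varphi(\bar{a})$, Robinson's test applied to the non-model-complete theory $T\cup\{\varphi(\bar{c})\}$ yields a formula $\chi(\bar{c},\bar{y})$ that is not equivalent modulo this theory to an existential formula. Choose new elements $\bar{b}$ and consider $\Phi_e$ asked about $\chi(\bar{a},\bar{b})$, with oracle $T$ together with finite atomic diagrams $\delta(\bar{a},\bar{b},\bar{d})$. There are three cases.
\begin{itemize}
\item Some $\delta$ makes $\Phi_e$ answer ``true'' while $\varphi\wedge\delta\wedge\neg\chi$ is consistent with $T$. Extend the condition to $\varphi\wedge\delta\wedge\neg\chi$, which forces $\Phi_e$ to err.
\item The symmetric case with ``false'' and $\chi$: extend to $\varphi\wedge\delta\wedge\chi$.
\item Otherwise every ``true''-producing $\delta$ satisfies $T\cup\{\varphi\}\models\delta\to\chi$, and every ``false''-producing $\delta$ satisfies $T\cup\{\varphi\}\models\delta\to\neg\chi$. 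Then $\Phi_e$ is never wrong where it converges, and we must instead force divergence.
\end{itemize}

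\emph{The main obstacle} is this third case. I would try to show that some consistent extension forces $\Phi_e$ to diverge on $\chi(\bar{a},\bar{b})$ along every further extension. Suppose not, so convergence is forced densely. Let $\Sigma$ be the set of existentially quantified ``true''-producing $\delta$'s. Then, by compactness, $T\cup\{\varphi(\bar{c}),\chi(\bar{c},\bar{y})\}\cup\{\neg\exists\bar{z}\,\delta : \delta\in\Sigma\}$ should be inconsistent. Hence $\chi$ would be equivalent modulo $T\cup\{\varphi(\bar{c})\}$ to a finite disjunction of existential formulas, contradicting the choice of $\chi$.

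The delicate point is justifying that inconsistency. I would use a pair of models $\mc{M}\subseteq\mc{N}$ of $T\cup\{\varphi(\bar{c})\}$ with $\mc{M}\models\chi(\bar{c},\bar{b})$ and $\mc{N}\models\neg\chi(\bar{c},\bar{b})$, which exists by the failure of existential equivalence. A convergent ``true'' computation on a finite piece of $D(\mc{M})$ would remain valid in $\mc{N}$, contradicting the third case. So $\mc{M}$'s atomic diagram must avoid convergence, and following a Henkin enumeration of $\mc{M}$ as our extension gives the desired divergence. Making this compatible with the finite-condition framework, so that the extension by $\mc{M}$'s diagram is still a legitimate condition, is where I expect the real work.
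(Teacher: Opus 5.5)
Your right-to-left direction is correct and is the paper's argument. The left-to-right direction has a genuine gap at your third case, and it is not just bookkeeping.

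The step ``convergence is forced densely, hence by compactness $T\cup\{\varphi(\bar c),\chi(\bar c,\bar y)\}\cup\{\neg\exists\bar z\,\delta:\delta\in\Sigma\}$ is inconsistent'' is invalid. Density only says that no single formula consistent with $T\cup\{\varphi(\bar c),\chi(\bar c,\bar y)\}$ implies all of the $\neg\exists\bar z\,\delta$. That is, the partial type is \emph{non-isolated}, not inconsistent. Your pair $\mathcal M\subseteq\mathcal N$ does show that $\Phi_e$ diverges along all of $D(\mathcal M)$. But there $\bar b$ typically realizes exactly such a non-isolated type, and no finite condition can force that.

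Here is a concrete instance. Take unary $M,P_n$ and binary $R,R_n$ ($n\in\omega$). Let $T$ be the theory of the structure in which $M$ is partitioned into infinite sets $P_n$; each $y\in P_n$ has exactly one successor outside $M$, via $R$ or via $R_n$, each option occurring infinitely often; and every element outside $M$ is the successor of exactly one element.
\begin{itemize}
\item Let $\chi(y)=M(y)\wedge\neg\exists z\,R(y,z)$. In a saturated model, take some $y\in M$ away from $\bar c$ that lies in no $P_n$ and has no successor, and add an $R$-successor to it. This shows $\chi$ is not existential modulo $T\cup\{\varphi(\bar c)\}$ for any $\varphi$.
\item The functional that searches for $n$ with $P_n(y)$ and then for an $R$- or $R_n$-successor is never wrong. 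Its convergence is dense, because every condition is realized in the standard structure, where every element of $M$ lies in some $P_n$. So you are in your third case.
\item Yet your type is consistent: it is realized by any $y\in M$ lying in no $P_n$ and having no $R$-successor.
\end{itemize}
Worse, consider any model of this $T$ omitting the two non-isolated types ``in $M$ but in no $P_n$'' and ``outside $M$ with no predecessor''. Its elementary diagram is uniformly computable from $T$ and its atomic diagram, by quantifier elimination after adding predicates for ``has an $R$-successor'', ``has an $R_n$-successor'' and the analogous predecessor predicates. Hence no finite-condition construction can defeat that functional at all. Only models realizing non-isolated types, such as saturated ones, witness the failure of relative decidability.

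To repair this you would have to carry infinitary commitments like ``$\bar b$ realizes $p$'' through the rest of the construction. At later stages your hypothesis, which concerns only single formulas $\varphi$ with $T\models\exists\bar x\,\varphi(\bar x)$, would then no longer apply, since naming a realization of a non-isolated type can make a theory model complete. Managing this interplay between realizing and omitting types is the real content of the theorem.

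The paper handles it with two different models rather than one generic one:
\begin{enumerate}
\item The Ash--Knight--Manasse--Slaman--Chisholm theorem gives, in each model, a tuple over which every formula is infinitary $\Sigma_1$.
\item A recursively saturated $\mathcal B$ cuts these disjunctions down to finite ones, yielding $T\models\exists\bar y\,\chi_\varphi(\bar y)$.
\item A model $\mathcal C$ in which every realization of the types $p_e$ is isolated supplies a single formula $\psi$ modulo which every $\varphi$ becomes existential.
\item $\mathcal B$ is used once more to make $\psi$ itself existential over a parameter tuple, so that $T\cup\{\forall\bar x\,(\psi(\bar x)\leftrightarrow\rho(\bar x,\bar c))\}$ is model complete.
\end{enumerate}
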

	
	\noindent The difficult direction of the proof is to show that if $T$ is relatively decidable then such a formula $\varphi(\bar{x})$ exists; the other implication is easy to see and is true even if the theory is not complete. The general outline is as follows. After applying standard computability-theoretic forcing techniques, for each model $\mc{A}$ and formula $\varphi(\bar{x})$ we obtain an infinitary $\Sigma_1$ formula $\psi(\bar{x})$, a countable disjunction of existential formulas, which is equivalent to $\varphi(\bar{x})$ in $\mc{A}$. The difficulty is that in different models $\mc{A}$, we may get different $\Sigma_1$ formulas. The heart of the proof is an elementary but intricate purely model-theoretic argument to resolve this by passing between large (saturated) and small (type-omitting) models of $T$. The argument, appearing in Section \ref{sec:complete}, is only four pages and is very much worth reading.
	
	Our result and the result of Chubb, Miller, and Solomon in the uniform case fit into a central theme in computable structure theory that robust computational properties should arise from definability. See, e.g., the standard texts \cite{AK00,MontalbanBook,MontalbanBook2} on computable structure theory where this theme plays a central role. Usually this is at the level of a single isomorphism type, e.g., Knight \cite{Knight86} showed that if we can enumerate a set $X$ from the atomic diagram of every copy of $\mc{A}$ then $X$ is enumeration reducible to the existential type of a tuple $\bar{a}$ in $\mc{A}$. The definability is also usually in the infinitary logic $\mc{L}_{\omega_1 \omega}$. Because we are working with a first-order theory, compactness and omitting types allow us to reduce this definability to a model-theoretic property of finitary logic. The relationship between relative decidability and uniform relative decidability also follows the usual pattern when working with a single isomorphism type, which is that in the uniform case one does not have to name a tuple from the structure, but in the non-uniform case one does. Our result agrees with this pattern on a surface level but, while usually the uniform and non-uniform cases are essentially the same, here the non-uniform case is significantly more challenging and requires new tools, e.g., saturation and omitting types, while the uniform case required only compactness.
	
	In the proof of the above theorem it is vital that $T$ be a complete theory. In the proof, this assumption is used in the following form: Given a sentence $\varphi$, if we can show that some particular $\mc{A} \models T$ satisfies $\varphi$, then every model of $T$ satisfies $\varphi$. Of course, it is natural to ask whether relative decidability can be characterized for incomplete theories as well. We show that there is no such characterization. Consider the naive definition of relative decidability; it requires a universal quantifier over all models of $T$. In terms of descriptive set theory, this makes the set of relatively decidable theories a $\boldsymbol{\Pi}^1_1$ set. On the other hand, a characterization such as that in Theorem \ref{thm:main} uses only quantifiers over finitary formulas and proofs, and so one consequence of that characterization is that the set of \textit{complete} relatively decidable theories is Borel.
	
	We show that the set of (not necessarily complete) relatively decidable theories is $\boldsymbol{\Pi}^1_1$-complete, and in particular it is not Borel.
	
	\begin{restatable}{theorem}{secondtheorem}\label{thm:main2}
		There is a language $\mathcal{L}$ such that the set
		\[ \{ \text{$T$ a $\mathcal{L}$-theory} : \text{$T$ is relatively decidable}\}\]
		is $\boldsymbol{\Pi}^1_1$-Wadge-complete.
	\end{restatable}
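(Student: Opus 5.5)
The plan is to give a continuous (Wadge) reduction from a standard $\boldsymbol{\Pi}^1_1$-complete set, namely the set $\mathrm{WF}$ of well-founded trees $\treeT \subseteq \omega^{<\omega}$, to the set of relatively decidable $\mathcal{L}$-theories. The upper bound is immediate from the definition: $T$ is relatively decidable iff for every countable $\mc{A}$ with domain $\mathbb{N}$, either $\mc{A}\not\models T$ or some Turing functional $\Phi$ satisfies $\Phi^{T\oplus D(\mc{A})} = D^{el}(\mc{A})$. This is a universal real quantifier over an arithmetical (hence Borel) matrix, so the set is $\boldsymbol{\Pi}^1_1$. The work is in the hardness direction.

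For hardness I would fix a language with unary predicates $P_\sigma$ for $\sigma \in \omega^{<\omega}$ (or a coding of them using finitely many symbols) together with some auxiliary relation symbols. Each tree $\treeT$ is then sent to a theory $T_\treeT$ whose axioms are computed continuously from $\treeT$: for $\sigma \in \treeT$ we add axioms describing a ``gadget'' attached to $\sigma$, linked to the gadgets of its children. The gadgets should be arranged so that the models of $T_\treeT$ split into pieces along paths of $\treeT$.

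The design goals are these. If $\treeT$ has an infinite path $f$, there should be a model $\mc{A}_f$ realizing the path. In $\mc{A}_f$, deciding some first-order fact must require knowing the entire path, since at each level the atomic diagram hides which child is chosen until an unbounded search succeeds, in the style of the $(\mathbb{N},S)$ example iterated. A forcing/diagonalization argument, the standard one used for non-relative-decidability, would then build a copy of $\mc{A}_f$ (or choose $f$ generic relative to $T_\treeT$) that defeats every functional $\Phi^{T_\treeT\oplus D(\mc{A})}$. Conversely, if $\treeT$ is well-founded, every model decomposes into pieces of well-founded rank. By induction on rank, naming finitely many parameters at each node makes each piece model complete, so the elementary diagram can be computed non-uniformly from $T_\treeT$ and the atomic diagram. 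The non-uniform guess of parameters is legitimate because only the existence of a computation is required, and well-foundedness ensures each model needs only finitely many guesses along any branch. Transfinite induction on the rank of $\treeT$ organizes this.

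The main obstacle is the well-founded direction. The incomplete theory $T_\treeT$ has many completions, and models may combine gadgets from different subtrees, so I must check that every model, and not only the intended ones, is relatively decidable. I would first try to make the gadgets for different first moves disjoint, flagged by a quantifier-free predicate visible in the atomic diagram. That reduces the general model to the intended ones, and by Theorem~\ref{thm:main} each completion along a well-founded subtree is relatively decidable via finitely many named constants. Keeping the ill-founded direction's diagonalization compatible with this rigidity is the delicate balance.
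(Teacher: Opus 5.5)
Your $\boldsymbol{\Pi}^1_1$ upper bound is fine. The hardness half, however, is a list of design goals rather than a proof: no $T_\treeT$ is specified, and the mechanisms you propose for both directions would fail as stated.

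In the well-founded direction, relative decidability allows only a finite amount of non-uniform advice per model, namely an index for one functional. So ``finitely many guesses along any branch'', organized by induction on rank, breaks down as soon as a model of the incomplete theory realizes gadgets at infinitely many nodes. Making the pieces disjoint does not help, since formulas quantify across pieces. Nor can you close the argument by citing Theorem~\ref{thm:main} for the completions. That theorem only re-expresses relative decidability as a model-completeness condition you would still have to verify. And even if every completion $S$ were relatively decidable, the algorithm for $S$ uses $S$ as its oracle, so this does not by itself transfer to $T_\treeT$.

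The missing idea is to make a \emph{single} parameter at a \emph{single} node suffice for the whole model, with the needed equivalences provable from $T_\treeT$. The paper enumerates all universal formulas $P_i$. For each non-leaf $\sigma$ it adds the axiom $[U_\sigma(x)\wedge\bigwedge_{j\le i}\forall y\,\neg U_{\sigma j}(y)]\to\forall\bar y\,[P_i(\bar y)\leftrightarrow V_{i,\sigma}(\bar y,x)]$, and the unconditional version at leaves. Well-foundedness then gives, in every model, a realized $U_\sigma$ none of whose children is realized. One element $c\in U_\sigma$ makes every formula equivalent to some $V_{i,\sigma}(\bar y,c)$, with $i$ computable from the formula. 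No rank induction is needed.

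In the ill-founded direction, ``deciding facts requires knowing the entire path'' cannot be the obstruction. The oracle contains $T_\treeT$, so if, say, $\treeT$ has a unique path computable from $\treeT$, you cannot take $f$ generic relative to $T_\treeT$, and the path is free information. Also, whatever an unbounded search is guaranteed to find is computable, so it hides nothing. What must be unobtainable is the infinite sequence of parameters $c_\sigma\in U_\sigma$ for $\sigma\prec\pi$: along the path, a node $\sigma$ with $\sigma\ell\prec\pi$ defines only the $P_i$ with $i<\ell$. For this, $U_\sigma$ must be invisible in the atomic diagram. That is why the paper passes to the Marker extension $T^+$ making each $U_\sigma$ and $A_i$ universal, and proves the quantifier-elimination Lemma~\ref{lem:qe}, which is used in both directions.

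Proving the failure then requires ingredients your sketch does not anticipate. First, a specific completion $S$ of $T_\treeT$ along $\pi$ is built as a union of Fra\"{i}ss\'{e} limits of universal theories $T^*_k$, which gives quantifier elimination in each finite sublanguage. Second, Theorem~\ref{thm:main} is applied contrapositively to $S^+$. For any $\varphi(\bar c)$ in $\mc{L}^+_k$ and large $j$, $A_j$ is shown not to be existentially definable over $S^+\cup\{\varphi(\bar c)\}$. The reason is that one can falsify $A_j(d)$ and every $U_\sigma\notin\mc{L}_k$ in a weak superstructure $\mc{B}\models S_\forall$ (Lemma~\ref{lem:weak-substructure}), and then extend to a model of $S$ that still satisfies $\varphi(\bar c)$.
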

	
	\noindent In particular, we show that given a tree $\treeT \subseteq \mathbb{N}^{<\mathbb{N}}$, we can construct (in a relatively computable way) a theory $T_{\treeT}$ such that
	\[ \text{$\treeT$ is well-founded} \quad \Longleftrightarrow \quad \text{$T_{\treeT}$ is relatively decidable}.\]
	This implies that there is no characterization of relative decidability for incomplete theories which is simpler in form than the naive definition. In particular, we cannot prove Theorem \ref{thm:main}, or any similar theorem, for arbitrary theories. The argument uses the tool of Marker extensions of theories, and we must develop the necessary definitions and lemmas, and particularly Lemma \ref{lem:qe}, in Section \ref{sec:three}. The proof of Theorem \ref{thm:main2} follows in Section \ref{sec:four}. The definition of the theory and the argument that if $\treeT$ is well-founded then $T_{\treeT}$ is relatively decidable are both reasonably short. The argument that if $\treeT$ is ill-founded then $T_{\treeT}$ is not relatively decidable is quite technical.
	
	This situation should remind the reader of omitting types. For complete theories, we have a simple characterization of when a type can be omitted: it must be non-isolated. For incomplete theories, this is sufficient but not necessary, and the sufficient and necessary conditions obtained by Casanovas and Farr\'e \cite{CasanovasFarre} involve the well-foundedness of trees. Indeed, it is not hard to show after reading that paper that the set of omittable types is $\boldsymbol{\Sigma}^1_1$-complete.
	
	Finally, we would like to highlight the following open problem of Goncharov:
	
	\begin{question}[Goncharov]
		Characterize the decidable theories $T$ such that every computable model of $T$ is decidable.
	\end{question}
	
	\noindent The corresponding index set is $\Sigma_{\omega+2}$; we conjecture that it is $\Sigma_{\omega+2}$ $m$-complete. However, the methods from this paper are not applicable to this problem, as (by index set complexity calculations) to prove that the index set is $\Sigma_{\omega+2}$ $m$-complete one would have to build decidable models $\mc{A}$ of $T$ such that for no $\bar{a} \in \mc{A}$ is $\Th(\mc{A},\bar{a})$ model complete.
	
	\section{Complete theories}\label{sec:complete}
	
	We will require a variant of the standard omitting types theorem where, for a complete theory, we try to omit countably many partial types even if some of them may be isolated. Of course an isolated type cannot be omitted, but we can force all of its realizations to be of a certain kind. Recall that a partial type $p(\bar{x})$ is isolated if there is a formula $\varphi(\bar{x})$ such that $T \models \exists \bar{x} \; \varphi(\bar{x})$ and $T \models \forall \bar{x} \; (\varphi(\bar{x}) \longrightarrow p(\bar{x}))$. In this case we say that $\varphi(\bar{x})$ isolates $p(\bar{x})$. The formula $\varphi(\bar{x})$ does not necessarily have to be part of the type $p(\bar{x})$. Thus a realization of $p(\bar{x})$ might or might not satisfy $\varphi(\bar{x})$.
	
	\begin{definition}
		Let $T$ be a complete theory, $\mc{A} \models T$, and let $p(\bar{x})$ be a partial type. We say that a realization $\bar{a}$ of $p(\bar{x})$ in $\mc{A}$ is \textit{isolated} if there is a formula $\varphi(\bar{x})$ that isolates $p(\bar{x})$ and such that $\mc{A} \models \varphi(\bar{a})$.
	\end{definition}
	
	\begin{theorem}[Omitting types]
		Let $T$ be a complete theory, and let $(p_i)_{i \in \omega}$ be a countable list of partial types. There is a countable model $\mc{A} \models T$ such that any realization of any $p_i$ in $\mc{A}$ is isolated.
	\end{theorem}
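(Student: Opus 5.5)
We will prove this by a Henkin construction, following the usual proof of the omitting types theorem. Add countably many new constants $C = \{c_0, c_1, \dots\}$ to $\mc{L}$. We build an increasing chain $T = \Sigma_0 \subseteq \Sigma_1 \subseteq \cdots$ of theories, each of the form $T \cup \{\theta_s(\bar{c})\}$ for a single consistent sentence $\theta_s$ that mentions only finitely many new constants. The union is to be a complete Henkin theory, so that its canonical term model $\mc{A}$ is a countable model of $T$.

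Dovetail three kinds of requirements:
\begin{enumerate}
\item decide each $\mc{L}\cup C$-sentence, for completeness;
\item add Henkin witnesses $\exists y\,\psi(y) \to \psi(c)$ with $c$ fresh;
\item for each pair $(i,\bar{d})$, with $i\in\omega$ and $\bar{d}$ a tuple of new constants of the right length, ensure that either $\bar{d}$ fails to realize $p_i$ or $\bar{d}$ satisfies some formula isolating $p_i$.
\end{enumerate}
The first two kinds are handled exactly as in the completeness theorem. Since every element of the term model is named by a constant, the third kind guarantees that every realization of every $p_i$ is isolated.

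The key step is the third kind of requirement. At stage $s$ we have $\theta_s(\bar{d},\bar{e})$, where $\bar{e}$ lists the other new constants appearing. Consider the $\mc{L}$-formula $\chi(\bar{x}) := \exists \bar{y}\,\theta_s(\bar{x},\bar{y})$. It is consistent with $T$, and since $T$ is complete, $T \models \exists \bar{x}\,\chi(\bar{x})$.
\begin{itemize}
\item \emph{Case 1: $\chi$ isolates $p_i$}, meaning $T \models \forall\bar{x}\,(\chi(\bar{x})\to p_i(\bar{x}))$. Then add $\chi(\bar{d})$, which is already implied by $\theta_s$. Whatever happens later, $\mc{A}\models \chi(\bar{d})$, so if $\bar{d}$ realizes $p_i$ it does so in an isolated way.
\item \emph{Case 2: $\chi$ does not isolate $p_i$.} Then there is $\sigma\in p_i$ with $T\cup\{\chi(\bar{x}),\neg\sigma(\bar{x})\}$ consistent. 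Set $\theta_{s+1} := \theta_s \wedge \neg\sigma(\bar{d})$. This is consistent with $T$, because a witness $\bar{x}$ for $\chi\wedge\neg\sigma$ yields an interpretation of $\bar{d},\bar{e}$. Now $\bar{d}$ does not realize $p_i$.
\end{itemize}
Here we need completeness of $T$ in two places: to know that a formula consistent with $T$ satisfies $T \models \exists \bar{x}\,\chi$, and to pass from consistency of $\theta_s$ to that of $\chi$. The argument is the standard one, with the isolated case now allowed instead of excluded.

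One subtlety is that $\bar{d}$ may contain repeated constants, or constants that do not yet appear in $\theta_s$. Both are handled by treating $\chi$ as a formula in the distinct variables and adding trivial conjuncts $d=d$. Also, elements named by different constants are identified in the term model. This is harmless: every element equals some $c_j$, and every tuple of elements is named by some tuple $\bar{d}$ that is handled at some stage, so the requirement for that $\bar{d}$ covers it.

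I expect no real obstacle beyond keeping Case 1 correct. The isolating formula must be an $\mc{L}$-formula with $T\models\exists\bar x\,\chi$. We get this from completeness, and $\mc{A}\models\chi(\bar{d})$ holds because $\theta_s(\bar{d},\bar{e})$ is true in $\mc{A}$.
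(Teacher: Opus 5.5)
Your proposal is correct and follows the paper's approach. The paper's one-line proof says to run the standard omitting-types (Henkin) construction while trying to omit every $p_i$ whether or not it is isolated, and your Case 1/Case 2 split is exactly the detail behind that. One small correction: completeness of $T$ is needed only to get $T \models \exists \bar{x}\,\chi(\bar{x})$, since consistency of $\chi$ with $T$ already follows from consistency of $\theta_s$.
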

	\begin{proof}
		We can construct $\mc{A}$ by the same construction as the standard type omitting argument except that we try to omit all of the types $p_i$ whether or not they are isolated. The result is that all realizations of any $p_i$ in $\mc{A}$ are isolated.
	\end{proof}
	
	The following lemma isolates the computability-theoretic content of the argument. It is more or less a standard application of computability-theoretic forcing with countable structures, which is essentially the same as the method of Vaught transforms.
	
	\begin{lemma}\label{lem:forcing-characterization}
		Let $T$ be a relatively decidable theory. Then for all $\mc{A} \models T$ there is $\bar{a} \in \mc{A}$ and a uniformly $T$-computable sequence of $T$-computable infinitary $\Sigma_1$ formulas $\Theta_{\varphi}(\bar{x},\bar{y})$ such that for all $\varphi$ we have
		\[ \mc{A} \models \varphi(\bar{x}) \leftrightarrow \Theta_{\varphi}(\bar{x},\bar{a}).\]
	\end{lemma}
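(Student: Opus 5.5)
Fix $\mc{A} \models T$ (countable, as in the definition). The plan is a forcing argument over copies of $\mc{A}$: the generic presentation is forced to satisfy the relative-decidability reduction for some fixed Turing functional, and this lets us read off $\Sigma_1$ definitions of the elementary diagram. Consider presentations of $\mc{A}$ given by bijections $g\colon \mathbb{N} \to \mc{A}$. The forcing conditions are finite injective tuples $\bar{p}$ from $\mc{A}$, viewed as finite initial segments of such a bijection, and a generic $g$ yields a copy $\mc{B}_g = g^{-1}(\mc{A})$ with atomic diagram $D(\mc{B}_g)$. By relative decidability, for every such $g$ there is an index $e$ with $\Phi_e^{T \oplus D(\mc{B}_g)} = \Diag_{el}(\mc{B}_g)$. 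Choose $g$ sufficiently generic, meeting the countably many dense sets relevant to statements about $\Phi_e^{T \oplus D(\mc{B}_g)}$; these are definable from $T$ and $\mc{A}$. Some condition $\bar{a}$ (an initial segment of $g$) together with some $e$ then forces ``$\Phi_e^{T\oplus D(\mc{B}_g)}$ is total and equals the elementary diagram.'' This is the usual Baire-category step: the set of $g$ for which this holds is comeager in the space of presentations, so some $e$ works on a nonmeager set, hence on all generics extending some condition $\bar{a}$.

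Next, define $\Theta_\varphi(\bar{x},\bar{y})$ as the disjunction, over all finite atomic diagrams $\delta$ of tuples extending $(\bar{y},\bar{x})$ (arranged with $\bar{y}$ first, then $\bar{x}$ placed at the appropriate indices, then further elements $\bar{z}$) on which the computation $\Phi_e^{T\oplus\delta}$ halts and outputs that $\varphi$ holds of the positions of $\bar{x}$, of the formula $\exists \bar{z}\, \delta(\bar{y},\bar{x},\bar{z})$. The index set is c.e. in $T$, uniformly in $\varphi$, so $\Theta_\varphi$ is a $T$-computable infinitary $\Sigma_1$ formula and the sequence is uniform. One must handle the technicality that $\bar{x}$ might overlap $\bar{y}$ or be placed at arbitrary indices; I would simply include all placements as separate disjuncts, and include equalities in $\delta$.

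To verify $\mc{A} \models \varphi(\bar{b}) \leftrightarrow \Theta_\varphi(\bar{b},\bar{a})$, argue as follows. If $\mc{A}\models\varphi(\bar b)$, extend $\bar a$ to a generic $g$ in which $\bar b$ appears. The forced computation eventually outputs ``$\varphi$ holds'' using a finite piece of the diagram, which gives a witnessing disjunct realized in $\mc{A}$. Conversely, suppose a disjunct $\exists\bar z\,\delta(\bar a,\bar b,\bar z)$ holds in $\mc{A}$ with witnesses $\bar c$. Then $\bar a\bar b\bar c$ (ordered appropriately) is a condition extending $\bar a$. Extend it to a generic $g$; by forcing, $\Phi_e$ correctly computes the elementary diagram of $\mc{B}_g$, and its output on this query is already determined by $\delta$ and equals ``$\varphi$ holds.'' Hence $\mc{A}\models\varphi(\bar b)$.

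The main obstacle is the forcing step: making precise that a single condition $\bar a$ and index $e$ force correctness on \emph{every} generic extension. This requires that ``$\Phi_e^{T\oplus D(\mc{B}_g)}$ is correct'' be a Borel (indeed arithmetic in $T$ and $\mc{A}$) property of $g$, so that it has the Baire property, and that the genericity used in the converse direction only requires meeting countably many dense sets below arbitrary conditions. I would handle this by passing to the Baire space of bijections and using the Baire category theorem directly, rather than a syntactic forcing relation.
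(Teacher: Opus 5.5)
Your proposal is correct and is essentially the paper's argument. The paper notes that relative decidability makes the elementary diagram relatively intrinsically computable (relative to $T$) and then cites the Ash--Knight--Manasse--Slaman--Chisholm theorem. The standard proof of that theorem is exactly the Baire-category forcing over presentations $g\colon \mathbb{N}\to\mathcal{A}$ that you carry out inline, and doing it by hand also makes explicit the relativization to the oracle $T$ and the unbounded arity of the elementary diagram, which the citation passes over.
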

	
	This condition is both sufficient and necessary, but it is a $\boldsymbol{\Pi}^1_1$ condition on $T$ and will be superseded by the better characterization obtained in Theorem \ref{thm:main}. Thus we prove only one direction.
	
	\begin{proof}
		Suppose that $T$ is relatively decidable. Fix $\mc{A} \models T$. Following Montalb\'an \cite{MontalbanBook2} we can consider the elementary diagram of $\mc{A}$ as a relation on $\omega \times \mc{A}^{< \omega}$,
		\[ \Diag_{el}(\mc{A}) = \{ (\ulcorner \varphi \urcorner,\bar{a}) : \mc{A} \models \varphi(\bar{a}) \}.\]
		where $\ulcorner \varphi \urcorner$ is the G\"odel number of $\varphi$. Since $T$ is relatively decidable, the atomic diagram of $\mc{A}$ computes the elementary diagram of $\mc{A}$, and hence the elementary diagram of $\mc{A}$ is relatively intrinsically computable. By the Ash-Knight-Manasse-Slaman-Chisholm theorem \cite{AshKnightManasseslaman89, Chisholm90}, in the version presented as Theorem {II.16} in Montalb\'an's book \cite{MontalbanBook}, there is $\bar{a} \in \mc{A}$ and a uniformly computable sequence of computable $\Sigma_1$ formulas $\Theta_{\varphi}(\bar{x},\bar{y})$ such that
		\[ \mc{A} \models \varphi(\bar{x}) \longleftrightarrow \Theta_\varphi(\bar{x},\bar{a}).\qedhere\]
	\end{proof}
	
	\maintheorem*
	
	\begin{proof}
		For the easy direction, suppose that there is a formula $\varphi(\bar{x})$ such that $T \models \exists \bar{x} \; \varphi(\bar{x})$ and such that the $\mc{L} \cup \{\bar{c}\}$-theory $T \cup \{\varphi(\bar{c})\}$ is model complete. Let $\mc{A} \models T$. Then there is some $\bar{a} \in \mc{A}$ such that $\mc{A} \models \varphi(\bar{a})$, and so $(\mc{A},\bar{a}) \models T \cup \{\varphi(\bar{c})\}$. Since this theory is model complete, $T$ together with the atomic diagram of $(\mc{A},\bar{a})$ computes the elementary diagram of this structure. Since $\bar{a}$ is a finite tuple of elements, the atomic and elementary diagrams of $(\mc{A},\bar{a})$ are Turing equivalent to the atomic and elementary diagrams of $\mc{A}$ respectively.
		
		Now suppose that $T$ is relatively decidable. Then by Lemma \ref{lem:forcing-characterization} for each $\mc{A} \models T$ there is $\bar{a} \in \mc{A}$ and a uniformly computable sequence of computable $\Sigma_1$ formulas $\Theta_{\varphi}(\bar{x},\bar{y}) = \bigdoublevee_{i = 1}^{\infty} \theta_{\varphi,i}(\bar{x},\bar{y})$, with each $\theta_{\varphi,i}(\bar{x},\bar{y})$ a finitary existential formula, such that for all $\varphi$ we have
		\[ \mc{A} \models \forall \bar{x} \; \left[ \; \varphi(\bar{x}) \; \longleftrightarrow \;\bigdoublevee_{i = 1}^{\infty} \theta_{\varphi,i}(\bar{x},\bar{a}) \; \right].\]
		Recall that these formulas came from a computability-theoretic forcing argument and that they are specific to the model $\mc{A}$. During the argument we will consider such formulas coming from two different models $\mc{B}$ and $\mc{C}$, and so we will use superscripts (e.g., $\theta^{\mc{B}}_{\varphi,i}(\bar{x},\bar{y})$) on these formulas to denote the associated model. The rest of the argument will be almost entirely model-theoretic (and though we use, e.g., recursive saturation, really all we need is countability rather than computability).
		
		Now let $\mc{B}$ be a recursively saturated model of $T$. As in Lemma \ref{lem:forcing-characterization} let $\bar{b} \in \mc{B}$ and $\Theta^{\mc{B}}_{\varphi}(\bar{x},\bar{y}) = \bigdoublevee_{i = 1}^{\infty} \theta^{\mc{B}}_{\varphi,i}(\bar{x},\bar{y})$ be such that for all $\varphi$,
		\begin{equation} \mc{B} \models \forall \bar{x} \;\left[ \; \varphi(\bar{x}) \;\longleftrightarrow \;\bigdoublevee_{i = 1}^{\infty} \theta^{\mc{B}}_{\varphi,i}(\bar{x},\bar{b})\; \right].\label{eq:B}\end{equation}
		We claim that for each $\varphi$ there is $n_\varphi$ such that
		\[ \mc{B} \models \forall \bar{x} \;\left[ \; \varphi(\bar{x}) \;\longleftrightarrow \; \bigvee_{i = 1}^{n_\varphi} \theta^{\mc{B}}_{\varphi,i}(\bar{x},\bar{b}) \; \right].\]
		If not, then for some $\varphi$ there is a sequence of tuples $\bar{c}_j$ with
		\[ \mc{B} \models \varphi(\bar{c}_j) \;\wedge \; \bigwedge_{i = 1}^{j} \neg \theta^{\mc{B}}_{\varphi,i}(\bar{c}_j,\bar{b}).\]
		Consider the partial type $q(\bar{x}) = \{\varphi(\bar{x}),\neg \theta^{\mc{B}}_{\varphi,i}(\bar{x},\bar{b}) : i \in \omega\}$. This type is computable and finitely realizable in $\mc{B}$, and therefore as $\mc{B}$ is recursively saturated the type must be realized in $\mc{B}$. But by \eqref{eq:B} no such type can be realized in $\mc{B}$. Thus we conclude that for each $\varphi$ there is $n_\varphi$ such that
		\[ \mc{B} \models \forall \bar{x} \;\left[ \; \varphi(\bar{x}) \;\longleftrightarrow \; \bigvee_{i = 1}^{n_\varphi} \theta^{\mc{B}}_{\varphi,i}(\bar{x},\bar{b}) \; \right].\]
		In particular, for each $\varphi$,
		\[ \mc{B} \models \exists \bar{y} \; \forall \bar{x} \; \left[ \; \varphi(\bar{x}) \; \longleftrightarrow \; \bigvee_{i = 1}^{n_\varphi} \theta^{\mc{B}}_{\varphi,i}(\bar{x},\bar{y}) \; \right].\]
		Thus, as $T$ is complete, for each $\varphi$ there is $n_\varphi$ such that
		\begin{equation}
			T \models \exists \bar{y} \; \forall \bar{x} \; \left[ \; \varphi(\bar{x}) \; \longleftrightarrow \; \bigvee_{i = 1}^{n_\varphi} \theta^{\mc{B}}_{\varphi,i}(\bar{x},\bar{y}) \; \right].\label{factfromB}
		\end{equation}
		
		Let $\Theta_{e,\varphi}(\bar{x},\bar{y}) = \bigdoublevee_{i = 1}^{\infty} \theta_{e,\varphi,i}(\bar{x},\bar{y})$ be a listing (indexed by $e$) of the computable sequences $\{ \Theta_{e,\varphi}(\bar{x},\bar{y})\}_{\varphi}$ (indexed by $\varphi$) of computable $\Sigma_1$ formulas. For each $e$, consider the partial type
		\[p_e(\bar{y}) = \left\{ \forall \bar{x} \;\left( \;\theta_{e,\varphi,i}(\bar{x},\bar{y}) \; \longrightarrow \; \varphi(\bar{x}) \; \right) \;:\; \varphi,i \right\}.\]
		By the omitting types theorem there is a model $\mc{C} \models T$ such that any realization of any $p_e$ is isolated. As in Lemma \ref{lem:forcing-characterization} let $\bar{c} \in \mc{C}$ and $\Theta^{\mc{C}}_{\varphi}(\bar{x},\bar{y}) = \bigdoublevee_{i = 1}^{\infty} \theta^{\mc{C}}_{\varphi,i}(\bar{x},\bar{y})$ be such that
		\[ \mc{C} \models \forall \bar{x} \; \left[ \; \varphi(\bar{x}) \; \longleftrightarrow \; \bigdoublevee_{i = 1}^{\infty} \theta^{\mc{C}}_{\varphi,i}(\bar{x},\bar{c}) \;\right].\]
		Let $e$ be the index of the sequence $\Theta^{\mc{C}}_{\varphi}(\bar{x},\bar{y})$. Thus $\bar{c}$ realizes $p_{e}(\bar{y})$. This realization is isolated, say by a formula $\psi(\bar{y})$. In particular, for each formula $\varphi$ and each $i$,
		\begin{equation}
			T \models \forall \bar{y} \; \left[ \; \psi(\bar{y}) \; \longrightarrow \; \forall \bar{x} \; \left( \; \theta^{\mc{C}}_{\varphi,i}(\bar{x},\bar{y}) \; 
			\longrightarrow \; \varphi(\bar{x}) \;\right)\;\right].\label{factfromC}
		\end{equation}
		Now by \eqref{factfromB}, for each $\varphi$,
		\[ \mc{C} \models \exists \bar{y} \; \forall \bar{x} \; \left[ \; \varphi(\bar{x}) \; \longleftrightarrow \; \bigvee_{i = 1}^{n_\varphi} \theta^{\mc{B}}_{\varphi,i}(\bar{x},\bar{y}) \;\right].\]
		Let $\bar{d}_\varphi$ be this $\bar{y}$, and let
		\[ \chi_\varphi(\bar{y}) := \forall \bar{x} \; \left[ \; \varphi(\bar{x}) \; \longleftrightarrow  \; \bigvee_{i = 1}^{n_\varphi} \theta^{\mc{B}}_{\varphi,i}(\bar{x},\bar{y}) \; \right].\]
		Then since $\mc{C} \models \chi_\varphi(\bar{d}_\varphi)$, for some $i_\varphi$ we have
		\[ \mc{C} \models \theta^{\mc{C}}_{\chi_\varphi,i_\varphi}(\bar{d}_\varphi,\bar{c}).\]
		Thus, for each $\varphi$,
		\begin{equation}\label{Tfactone}
			T \models \exists \bar{x} \; \exists \bar{y} \; \left[ \; \psi(\bar{y}) \wedge \theta^{\mc{C}}_{\chi_\varphi,i_\varphi}(\bar{x},\bar{y})\;\right]
		\end{equation}
		and also, by \eqref{factfromC},
		\begin{equation}\label{Tfacttwo}
			T \models \forall \bar{x} \; \forall \bar{y} \; \left[ \; \left( \; \psi(\bar{y}) \wedge \theta^{\mc{C}}_{\chi_\varphi,i_\varphi}(\bar{x},\bar{y}) \right) \; \longrightarrow \; \chi_\varphi(\bar{x}) \; \right].
		\end{equation}
		Recalling the definition of $\chi_\varphi(\bar{x})$, this says that if we find $\bar{x},\bar{y}$ with $\psi(\bar{y}) \wedge \theta^{\mc{C}}_{\chi_\varphi,i_\varphi}(\bar{x},\bar{y})$ then, over the parameters $\bar{x}$, $\varphi$ has a finitary existential definition $\varphi(\bar{z}) \leftrightarrow \bigvee_{i = 1}^{n_\varphi} \theta^{\mc{B}}_{\varphi,i}(\bar{z},\bar{x})$.
		
		For simplicity in what follows, let
		\[ \nu_\varphi(\bar{x},\bar{y}) := \theta^{\mc{C}}_{\chi_\varphi,i_\varphi}(\bar{x},\bar{y}) \]
		and let
		\[ \eta_\varphi(\bar{z},\bar{x}) := \bigvee_{i = 1}^{n_\varphi} \theta^{\mc{B}}_{\varphi,i}(\bar{z},\bar{x}).\]
		The important facts are that these are existential formulas and that, rewriting \eqref{Tfactone} and \eqref{Tfacttwo} in terms of this new notation,
		\begin{equation}\label{Tfactonenew}
			T \models \exists \bar{x} \; \exists \bar{y} \; \left[ \; \psi(\bar{y}) \; \wedge \; \nu_\varphi(\bar{x},\bar{y}) \;\right]
		\end{equation}
		and
		\begin{equation}\label{Tfacttwonew}
			T \models \forall \bar{x} \; \forall \bar{y} \; \left[ \; \left( \; \psi(\bar{y}) \wedge \nu_\varphi(\bar{x},\bar{y}) \; \right) \; \longrightarrow \; \forall \bar{z} \; \left( \; \varphi(\bar{z}) \; \longleftrightarrow  \; \eta_\varphi(\bar{z},\bar{x}) \;\right)\; \right].
		\end{equation}
		Otherwise, we can forget the specific forms that these formulas take. Note that this gives us for each $\varphi$ a formula
		\[ \exists \bar{x} \; \exists \bar{y}  \; \left[ \; \psi(\bar{y}) \; \wedge \; \nu_\varphi(\bar{x},\bar{y}) \; \wedge \; \eta_\varphi(\bar{z},\bar{x}) \; \right] \] equivalent to $\varphi(\bar{z})$ which would be existential if $\psi(\bar{y})$  were. This formula $\psi(\bar{y})$ is the same formula for all $\varphi$, so we have essentially reduced to finding an existential formula equivalent to a single formula.
		
		We now return to considering the recursively saturated model $\mc{B}$. Recall that there is $\bar{b} \in \mc{B}$ such that, for the formula $\psi(\bar{x})$ in particular, 
		\[ \mc{B} \models \forall \bar{x} \; \left[ \; \psi(\bar{x}) \; \longleftrightarrow \; \bigdoublevee_{i = 1}^{\infty} \theta^{\mc{B}}_{\psi,i}(\bar{x},\bar{b}) \; \right].\]
		As argued previously, since $\mc{B}$ is recursively saturated there is $n$ such that
		\[ \mc{B} \models \forall \bar{x} \; \left[ \; \psi(\bar{x}) \; \longleftrightarrow \; \bigvee_{i = 1}^{n} \theta^{\mc{B}}_{\psi,i}(\bar{x},\bar{b}) \; \right].\]
		Thus
		\[ \mc{B} \models \exists \bar{y} \; \forall \bar{x} \; \left[ \; \psi(\bar{x}) \; \longleftrightarrow \; \bigvee_{i = 1}^{n} \theta^{\mc{B}}_{\psi,i}(\bar{x},\bar{y}) \; \right].\]
		Let $\rho(\bar{x},\bar{y}) :=\bigvee_{i = 1}^{n} \theta^{\mc{B}}_{\psi,i}(\bar{x},\bar{y})$. Then, since $T$ is complete,
		\begin{equation}\label{existence}
			T \models \exists \bar{y} \; \forall \bar{x} \; \left[ \; \psi(\bar{x}) \; \longleftrightarrow \; \rho(\bar{x},\bar{y}) \; \right].
		\end{equation}
		We claim that the $\mc{L} \cup \{\bar{c}\}$-theory
		\[ T^* := T \cup \left\{ \; \forall \bar{x} \; \left[ \; \psi(\bar{x}) \; \longleftrightarrow \; \rho(\bar{x},\bar{c}) \; \right] \; \right\} \]
		is model complete. We will verify this by showing that for each formula $\varphi$ we have
		\begin{equation}\label{toprove}
			T^* \models \forall \bar{z} \; \left[ \; \varphi(\bar{z}) \; \longleftrightarrow \; \exists \bar{x} \; \exists \bar{y} \; \left(\;\rho(\bar{y},\bar{c}) \wedge \nu_\varphi(\bar{x},\bar{y}) \wedge \eta_\varphi(\bar{z},\bar{x}) \;\right)\;\right].
		\end{equation}
		where $\exists \bar{x} \; \exists \bar{y} \; \left(\;\rho(\bar{y},\bar{c}) \wedge \nu_\varphi(\bar{x},\bar{y}) \wedge \eta_\varphi(\bar{z},\bar{x}) \;\right)$ is an existential $\mc{L} \cup \{\bar{c}\}$ formula. Let $(\mc{A},\bar{a}) \models T^*$, that is, $\mc{A} \models T$ and
		\begin{equation}\label{Afact}
			\mc{A} \models \forall \bar{x} \; \left[ \; \psi(\bar{x}) \;\longleftrightarrow \; \rho(\bar{x},\bar{a}) \;\right].
		\end{equation}
		Then for each formula $\varphi$ we have, by \eqref{Tfactonenew} and \eqref{Afact},
		\begin{equation}\label{Afact2}
			\mc{A} \models \exists \bar{x} \;\exists \bar{y} \; \left[ \; \rho(\bar{y},\bar{a}) \wedge \nu_\varphi(\bar{x},\bar{y}) \;\right]
		\end{equation}
		and, by \eqref{Tfacttwonew} and \eqref{Afact},
		\begin{equation}\label{Afact3}
			\mc{A} \models \forall \bar{x} \; \forall \bar{y} \; \left[ \; \left( \; \rho(\bar{y},\bar{a}) \wedge \nu_\varphi(\bar{x},\bar{y}) \; \right) \; \longrightarrow \; \forall \bar{z} \; \left( \; \varphi(\bar{z}) \; \longleftrightarrow \; \eta_\varphi(\bar{z},\bar{x}) \; \right) \; \right].
		\end{equation}
		Combining \eqref{Afact2} and \eqref{Afact3} we get
		\[ \mc{A} \models \forall \bar{z} \; \left[ \; \varphi(\bar{z}) \; \longleftrightarrow \; \exists \bar{x} \; \exists \bar{y} \; \left( \; \rho(\bar{y},\bar{a}) \wedge \nu_\varphi(\bar{x},\bar{y}) \wedge \eta_\varphi(\bar{z},\bar{x}) \; \right) \; \right].\]
		This completes the proof of \eqref{toprove}, and so we have shown that $T^*$ is model complete. Together with \eqref{existence} we have completed the proof of the theorem.
	\end{proof}
	
	\section{Marker extensions}\label{sec:three}
	
	Let $\cL$ be a countable relational language.
	Let $T$ be an $\cL$-theory and let $\alpha_1(\bar{x}),\alpha_2(\bar{x}),\ldots$ be a distinguished set of relations from $\cL$ with $\alpha_i$ of arity $n_i$.
	
	We will define a new language $\mc{L}^+$ and an $\mc{L}^+$-theory $T^+$ which we call the \textit{Marker extension of $T$ making $\alpha_1,\alpha_2,\ldots$ into $\Sigma_1$ relations}. This new theory will essentially be the same as the original theory $T$ except that the $\alpha_i(\barx)$ will be understood as definable $\Sigma_1$ relations. Thus given some $\cA \models T$, we can find some 
	$\cA^+ \models T^+$ which is essentially the same, except we have now turned the $\alpha_i$ into $\Sigma_1$-definable relations. Marker extensions, first introduced by Marker in \cite{Marker}, are now a standard tool in computable structure theory, and there are many different ways of defining them with slightly different properties. While Marker used the construction at the level of a theory---which is what we will do---since then they have often been used on a single structure. We will give a full development in order to prove Lemma \ref{lem:qe} which as far as we know is new.
	
	Let our new language be 
	\[\cL^+ = (\cL - \set{\alpha_i(\barx)}_{i \in \omega}) \cup \set{f_{i, j}(y)}_{i \in \omega, 1 \leq j \leq n_i} \cup \set{S_1(x), S_2(x)}.\]
	Here, $S_1$ and $S_2$ are unary relations, and the $f_{i,j}$ are unary functions. We will write $f_i(y) = \bar{x}$ for the function $f_i(y) = (f_{i,1}(y),\ldots,f_{i,n_i}(y))$. At the level of formulas, $f_i(y) = \barx$ stands for $\bigwedge_{j = 1}^{n_i} f_{i, j}(y) = x_j$. Similarly we write $S_1(\barx)$ as shorthand for $\bigwedge_{i = 1}^{\abs{\barx}} S_1(x_i)$.
	
	We first explain, given an $\mc{L}$-structure $\cA$, how to define its Marker extension $\cA^+$ which will be an $\mc{L}^+$-structure. $\cA^+$ will have two sorts. $S_1$ will identify the first sort, the elements from the original structure, and $S_2$ will 
	identify the second sort, extra elements we add to our structure to certify whether a relation $\alpha_i$ is true or false. The domain of $\cA^+$ will consist of the domain of $\cA$, all satisfying the relation $S_1$, together with some number of new elements satisfying the relation $S_2$, and no other elements. (Usually we will have infinitely many new elements, but not always.) For each relation in $\mc{L} \cap \mc{L}^+$, we keep the relation the same on the domain of $\cA$, and false on any other tuple. Then, for each $\alpha_i$ and $\bar{a} \in S_1$ such that $\cA \models \alpha_i(\bar{a})$, we introduce an element $b$ of $S_2$ satisfying $f_i(b) = \bar{a}$. We say that $b$ witnesses that $\alpha_i(\bar{a})$ is true. Each element of $S_2$ will witness one and only one such relation. To fully define $\cA^+$, we set $f_{i,j}(x) = x$ for $x$ from either sort if $x$ is not witnessing a relation $\alpha_i$. We have, in $\cA^+$, definable relations $\alpha_i$ on the first sort which make the first sort isomorphic to $\cA$. Thus $\cA$ can be recovered from $\cA^+$. We call $\cA^+$ the Marker extension of $\cA$ making $\alpha_1,\alpha_2,\ldots$ into $\Sigma_1$ relations.
	
	Given the theory $T$, we now define the theory $T^+$. Given $\cA \models T$, we should have $\cA^+ \models T^+$. We begin with axioms for the sorts.
	\begin{enumerate}
		\item $T^+ \models \forall x \; [S_1(x) \vee S_2(x)]$.
		\item $T^+ \models \forall x \; [S_1(x) \longleftrightarrow \neg S_2(x)]$.
	\end{enumerate}
	For relation symbols $R$ from $\mc{L}$ which remain in $\cL^+$, they can only be satisfied by tuples from the first sort.
	\begin{enumerate}[resume]
		\item $T^+ \models \forall \barx \; [ R(\barx) \longrightarrow S_1(\barx)]$.
	\end{enumerate}
	We now write down the axioms that ensure that the $f_{i,j}$ appropriately define relations on the first sort:
	\begin{enumerate}[resume]
		\item for all $i$ and $j = 1,\ldots,n_i$,
		\[ T^+ \models \forall x \; [S_1(x) \longrightarrow f_{i, j}(x) = x],\]
		\item for all $i$,
		\[ T^+ \models \forall x \forall \bary \; [ \;[S_2(x) \wedge f_i(x) = \bary] \longrightarrow [S_1(\bary) \vee \bar{y} = (x,\ldots,x)] \;],\]
		\item for all $i$,
		\[ T^+ \models \forall \barx \forall y \forall z \; [ \; [S_1(\barx) \wedge S_2(y) \wedge S_2(z) \wedge f_i(y) = \barx \wedge f_i(z) = \barx] \longrightarrow y = z \;],\]
		\item for all $i \neq j$,
		\[ T^+ \models \forall \barx \forall  y \; [ \; [S_1(\barx) \wedge S_2(y) \wedge f_i(y) = \barx] \longrightarrow f_j(y) = (y,\ldots,y) \; ].\]
	\end{enumerate}
	We now get $\Sigma_1$-definable relations on tuples from the first sort corresponding to the $\alpha_i$. Abusing notation, we often also write $\alpha_i$ for these definable relations, but when we need to distinguish them we write $\tilde{\alpha}_i$.
	\[\tilde{\alpha}_i(\barx) := \exists y \; [S_2(y) \wedge f_i(y) = \barx]\]
	When this holds, we say that $y$ is the witness to $\alpha_i(\bar{x})$. Axioms (6) ensure that each witness is unique, while axioms (7) ensure that each element  cannot be a witness to two different relations $\alpha_i$ and $\alpha_j$. Since the $f_i$ are (tuples of) functions, each element cannot be a witness to two different relations for the same $\alpha_i$ on different tuples. We inductively define a transformation of an $\mc{L}$-formula $\varphi$ into an $\mc{L}^+$-formula $\varphi^*$ by replacing each instance of $\alpha_i$ with the definable relation $\tilde{\alpha}_i$, and with all variables of $\varphi$ interpreted as belonging to $S_1$: Each quantifier $\exists \barx\; \varphi(\barx)$ becomes $\exists \barx\; [S_1(\barx) \wedge  \varphi^*(\barx)]$, and each quantifier $\forall \barx\; \varphi(\barx)$ becomes $\forall \barx\; [S_1(\barx) \longrightarrow  \varphi^*(\barx)]$. If $\varphi(\bar{x})$ has free variables $\bar{x}$, in $\varphi^*(\bar{x})$ we also assert that $\bar{x} \in S_1$. For the final axioms of $T^+$, we take the axioms of $T$, translated as just described.
	\begin{enumerate}[resume]
		\item $T^+ \models \varphi^*$ for each $\varphi \in T$.
	\end{enumerate} 
	This completes the definition of $T^+$.
	
	We begin with some observations. If $T$ is consistent, say with a model $\cA$, then $\cA^+$ is a model of $T^+$. Thus:
	
	\begin{lemma}\label{lem:consistent}
		If $T$ is consistent, then $T^+$ is consistent.
	\end{lemma}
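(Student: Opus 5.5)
The plan is to take a model $\cA \models T$, which exists because $T$ is consistent, and verify directly that its Marker extension $\cA^+$, constructed as described above, satisfies every axiom of $T^+$. Consistency of $T^+$ then follows at once. We check the axiom groups (1)--(8) in order.

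Axioms (1) and (2) hold because the domain of $\cA^+$ is the disjoint union of the domain of $\cA$, which we put in $S_1$, and the set of new witness elements, which we put in $S_2$. Axiom (3) holds because every relation of $\cL \cap \cL^+$ is interpreted as in $\cA$ on the first sort and is false on any tuple meeting the second sort. Axiom (4) holds because for $x \in S_1$ we set $f_{i,j}(x) = x$.

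For axioms (5)--(7), take an element $b \in S_2$. By construction $b$ witnesses exactly one instance $\alpha_i(\bar{a})$, where $\bar{a}$ is a tuple from the first sort. So $f_i(b) = \bar{a} \in S_1$, and $f_k(b) = (b,\ldots,b)$ for every $k \neq i$. This gives (5) and (7). For (6), we introduce a single witness for each pair $(i,\bar{a})$ with $\cA \models \alpha_i(\bar{a})$. Hence two elements of $S_2$ with the same value $f_i(y) = f_i(z) = \bar{a} \in S_1$ are equal. One edge case needs care: $f_i(b) = (b,\ldots,b)$ is not in $S_1$. This causes no conflict with (6), since the hypothesis of (6) requires $S_1(\bar{x})$.

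The main step is axiom (8). We show by induction on $\cL$-formulas $\varphi(\bar{x})$ that for every $\bar{a}$ from the first sort,
\[ \cA \models \varphi(\bar{a}) \iff \cA^+ \models \varphi^*(\bar{a}). \]
For an atomic formula in $\cL \cap \cL^+$ this is immediate, and the same holds for equality. For $\alpha_i(\bar{x})$, first suppose $\cA \models \alpha_i(\bar{a})$. Then the witness $b$ satisfies $S_2(b) \wedge f_i(b) = \bar{a}$, so $\tilde{\alpha}_i(\bar{a})$ holds in $\cA^+$. Conversely, suppose some $y \in S_2$ has $f_i(y) = \bar{a}$. Since $\bar{a} \in S_1$, the tuple $\bar{a}$ is not $(y,\ldots,y)$. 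So $y$ must be the witness for $\alpha_i(\bar{a})$, which gives $\cA \models \alpha_i(\bar{a})$. The Boolean connectives pass through the induction trivially. The quantifier cases work because $\varphi^*$ relativizes each quantifier to $S_1$, and $S_1$ is exactly the domain of $\cA$. Applying this to sentences $\varphi \in T$ gives $\cA^+ \models \varphi^*$.

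This completes the verification that $\cA^+ \models T^+$. I expect no real obstacle here. The only points needing attention are the converse direction of the $\alpha_i$ case and the convention $f_{i,j}(y) = y$ for non-witness coordinates.
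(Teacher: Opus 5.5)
Your proof is correct and follows the same route as the paper, which simply observes that for any model $\cA \models T$ the Marker extension $\cA^+$ is a model of $T^+$. Your axiom-by-axiom check supplies the verification the paper leaves implicit, including the induction showing $\cA \models \varphi(\bar{a}) \iff \cA^+ \models \varphi^*(\bar{a})$ for axiom (8).
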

	
	\noindent On the other hand, given a model $\mc{M}$ of $T^+$, we can define an associated model $\cA$ whose domain is the first sort of $\mc{M}$, and interprets the $\alpha_i$ using the definable relations $\tilde{\alpha}_i$. Axioms (8) ensure that $\cA \models T$. It is not necessarily true that $\mc{M}$ is $\mc{A}^+$, but $\mc{A}^+$ will be a substructure of $\mc{M}$, with $\mc{M} - \mc{A}^+$ consisting only of elements of  the second sort $S_2$ which are not witnesses to any relations. For $n \in \omega \cup \{ \omega\}$, we write $\mc{A}^+_n$ for the structure which is $\mc{A}^+$ together with $n$ additional elements in the second sort which are not witnesses to any relations. The $\mc{A}^+_n$ for $n \geq 1$ may not be models of $T^+$ in all cases, but often by compactness they will be. One such instance is if $T$ satisfies a technical assumption we call ($*$): $T \models \exists \bar{x} \; \alpha_i(\bar{x})$ for infinitely many $i$. In this case the countable models of $T^+$ are exactly the models $\mc{A}^+_n$ for $\mc{A} \models T$ and $n \in \omega \cup \{ \omega\}$.
	
	Next, we prove the following quantifier-elimination result which says that any $\mc{L}^+$ formula about a model of $T^+$ can be expressed by a Boolean combination of quantifier-free $\mc{L}^+$-formulas and formulas (possibly involving quantifiers) in the original language $\mc{L}$ which have been translated to $\cL^+$. For this we will need to assume that $T$ satisfies ($*$).\footnote{We could make slightly different definitions in the case that there are only finitely many $\alpha_1,\ldots,\alpha_k$. In general, we could always add new relations $\beta(\bar{x})$ to the list $\alpha_i$ and assume that $T \models \forall \bar{x} \; \beta(\bar{x})$. This is only a trivial modification to $T$ but makes $T$ satisfy assumption ($*$).}
	
	\begin{lemma}\label{lem:qe}
		Suppose that $T$ has property ($*$). Then $T^+$ has quantifier elimination
		in the language  
		\[\cL^* = \cL^+ \cup \set{\vphi^*: \text{$\vphi$ an $\cL$-formula}}.\]
	\end{lemma}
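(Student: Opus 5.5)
We will prove quantifier elimination for $T^+$ in $\cL^*$ using the standard back-and-forth criterion. Take $\omega$-saturated models $\mc{M},\mc{N}\models T^+$ (or countable ones, given the description of countable models as $\mc{A}^+_n$ under ($*$)). Let $\bar{m}\in\mc{M}$ and $\bar{n}\in\mc{N}$ be finite tuples with the same quantifier-free $\cL^*$-type. We show that for every $m'\in\mc{M}$ there is $n'\in\mc{N}$ such that $\bar{m}m'$ and $\bar{n}n'$ still have the same quantifier-free $\cL^*$-type. This back-and-forth property, together with a common-substructure condition, gives quantifier elimination. The empty tuple needs care. The sentences $\varphi^*$ for $\varphi$ an $\cL$-sentence belong to $\cL^*$ as $0$-ary relations, so agreement on the empty tuple gives $\mc{A}_\mc{M}\equiv\mc{A}_\mc{N}$, where $\mc{A}_\mc{M},\mc{A}_\mc{N}$ are the associated $\cL$-structures on the first sorts.

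First we close the tuples under the unary functions $f_{i,j}$. By axioms (4)--(7), the closure of $\bar{m}$ consists of $\bar{m}$ together with the first-sort coordinates $f_i(b)$ for the witnesses $b\in\bar{m}$; for each witness only one $i$ is nontrivial, so the closure is finite. We may therefore assume $\bar{m}$ is closed, and split it as $\bar{m}=\bar{a}\,\bar{w}\,\bar{u}$. Here $\bar{a}$ lies in $S_1$, $\bar{w}$ lists second-sort witnesses (each with a specified $i$ and $f_i(w)\subseteq\bar{a}$), and $\bar{u}$ lists second-sort non-witnesses. The quantifier-free $\cL^*$-type records the following:
\begin{itemize}
\item the full $\cL$-type of $\bar{a}$ in $\mc{A}_\mc{M}$, through the atoms $\varphi^*(\bar{a})$;
\item for each element of $\bar{w}$, which $\alpha_i$ it witnesses and on which subtuple of $\bar{a}$;
\item the equality pattern.
\end{itemize}
We check that these data are really quantifier-free in $\cL^*$. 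Being a witness for $\alpha_i$ is $S_2(x)\wedge f_i(x)\neq(x,\ldots,x)$, which is quantifier-free. Being a non-witness is $\bigwedge_i f_i(x)=(x,\ldots,x)$, an infinite conjunction, so it is captured by the type, not by a single formula; that suffices for the type-matching argument.

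Next we extend by one element $m'$, in three cases.
\begin{enumerate}
\item If $m'\in S_1$, then $\mathrm{tp}^{\cL}(\bar{a})$ in $\mc{A}_\mc{M}$ equals $\mathrm{tp}^{\cL}(\bar{a}')$ in $\mc{A}_\mc{N}$ for the corresponding $\bar{a}'\subseteq\bar{n}$. By saturation of $\mc{N}$ (hence of $\mc{A}_\mc{N}$, which is interpretable in it), we find $n'$ with $\mathrm{tp}^{\cL}(\bar{a}'n')=\mathrm{tp}^{\cL}(\bar{a}m')$. New witnesses are not added, and equalities with existing witnesses cannot occur across sorts, so the quantifier-free types match.
\item If $m'$ is a witness for $\alpha_i(\bar{e})$, we first add the coordinates $\bar{e}$ one at a time using case 1. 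Then the $\cL$-type of $\bar{e}$ includes $\alpha_i(\bar{e})$, so in $\mc{N}$ the corresponding tuple has a witness $n'$, unique by axiom (6). It is new exactly when $m'$ is new, because uniqueness of witnesses matches the equality pattern.
\item If $m'$ is a non-witness second-sort element not in $\bar{m}$, we need $\mc{N}$ to contain a non-witness second-sort element outside $\bar{n}$. This is where ($*$) is used. Infinitely many $\alpha_i$ are satisfiable, so there are infinitely many witnesses, and by compactness and saturation the type ``$S_2(x)\wedge\bigwedge_i f_i(x)=(x,\ldots,x)\wedge x\notin\bar{n}$'' is consistent and realized in $\mc{N}$. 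Consistency holds because each finite fragment excludes only finitely many $i$, and some witness for an $\alpha_j$ with $j$ outside that finite set exists and lies outside $\bar{n}$.
\end{enumerate}

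The main obstacle is case 3, together with making the saturation transfer precise. Specifically, we must check that $\omega$-saturation of $\mc{N}$ gives saturation of the interpreted structure $\mc{A}_\mc{N}$ for $\cL$-types over finitely many parameters, and we must get the counting of non-witnesses right. On the second point, ``non-witness'' is not first-order, so the element found is a realization of a type, not of a formula. We also have to use the earlier observation that any $\mc{A}^+_n$ with $n\ge 1$ is a model of $T^+$, which follows by compactness under ($*$). This guarantees that the extra non-witness elements are type-consistent and that no sentence of $T^+$ bounds their number.
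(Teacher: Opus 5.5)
Your proof is correct, but it takes a different route from the paper's.

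\emph{The paper's route.} It uses the formula-by-formula embedding criterion. Given a common $\cL^*$-substructure of $\cM,\cN\models T^+$ and a single quantifier-free $\varphi(\bar a,y)$ realized by some $u\in\cM$, it finds $v\in\cN$ with $\cN\models\varphi(\bar a,v)$, and it never uses saturation.
\begin{itemize}
\item In Case 1 it exploits the fact that $(\exists y\,\psi)^*$ is itself an atomic $\cL^*$-formula, so its truth passes directly from $\cM$ to $\cN$.
\item In Case 2 it uses the analogous atom $(\exists\bar y\,[\alpha_k(\bar x',\bar y)\wedge\psi\wedge\Delta])^*$ to produce the first-sort coordinates, takes the unique witness over them, and then checks the remaining conjuncts one by one.
\item In Case 3 a single formula mentions only finitely many $f_{k,i}$, so it suffices to take $v$ to be a witness for some $\alpha_m$ with $m$ large. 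This is where ($*$) enters.
\end{itemize}

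\emph{Your route.} You work with $\omega$-saturated models and full quantifier-free types.
\begin{itemize}
\item In Case 1 you realize a complete $\cL$-type in $\mc{A}_{\cN}$. This is legitimate: $\varphi\mapsto\varphi^*$ translates $\cL$-types over $S_1$-parameters into $\cL^+$-types, so $\omega$-saturation of $\cN$ passes to $\mc{A}_{\cN}$.
\item Case 2 then reduces cleanly to Case 1 plus uniqueness of witnesses (axiom (6)).
\item In Case 3 you must realize a genuine non-witness, because the full type of $m'$ contains $f_i(m')=(m',\ldots,m')$ for every $i$. You correctly get this from ($*$), axiom (7) and saturation.
\end{itemize}

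\emph{Trade-off.} Your version gives a cleaner case analysis with no conjunct-by-conjunct bookkeeping, at the cost of saturation machinery. The paper's version is more elementary and works in arbitrary models.

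\emph{Two small corrections.} First, drop the parenthetical ``or countable ones''. Case 1 needs realization of complete $\cL$-types over parameters, and a countable $\mc{A}^+_n$ with non-saturated $\mc{A}$ need not provide this. Second, the closing appeal to $\mc{A}^+_n\models T^+$ is unnecessary. Finite satisfiability of the non-witness type in $\cN$ already follows from ($*$) and axiom (7): choose a witness for some $\alpha_j$ with $j$ outside the finitely many indices involved.
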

	
	Sometimes one always allows at least one free variable in a quantifier elimination result. Here we do not: every sentence is equivalent to a quantifier-free sentence. This yields the following important lemma which we state and prove before returning to the proof of the proposition.
	
	\begin{lemma}\label{lem:complete}
		If $T$ is complete and has property ($*$), then $T^+$ is complete.
	\end{lemma}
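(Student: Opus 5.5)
We use Lemma \ref{lem:qe}. By Lemma \ref{lem:consistent}, $T^+$ is consistent, since $T$ is. Let $\sigma$ be any $\mc{L}^+$-sentence. The goal is to show that either $T^+ \models \sigma$ or $T^+ \models \neg\sigma$. By Lemma \ref{lem:qe}, and since the quantifier elimination there applies to sentences as well, there is a quantifier-free $\mc{L}^*$-sentence $\sigma'$ with $T^+ \models \sigma \leftrightarrow \sigma'$. It therefore suffices to show that $T^+$ decides every quantifier-free $\mc{L}^*$-sentence.

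A quantifier-free sentence is a Boolean combination of atomic sentences. We first identify which atomic $\mc{L}^*$-sentences there are. The language $\mc{L}$ is relational, and $\mc{L}^+$ adds only unary relations and unary function symbols, with no constants. Hence there are no closed terms, and every atomic formula of $\mc{L}^+$ has at least one free variable. The only atomic $\mc{L}^*$-sentences are therefore the zero-ary ones, namely $\varphi^*$ for $\varphi$ an $\mc{L}$-sentence, together with $\top$ and $\bot$ if they are counted as atomic.

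We next show that $T^+$ decides each such $\varphi^*$. Since $T$ is complete, either $T \models \varphi$ or $T \models \neg\varphi$. Suppose $T \models \varphi$. Let $\mc{M} \models T^+$, and let $\mc{A}$ be the associated $\mc{L}$-structure on the first sort, with the $\alpha_i$ interpreted by the $\tilde{\alpha}_i$. Axioms (8) give $\mc{A} \models T$, so $\mc{A} \models \varphi$. The translation $\varphi \mapsto \varphi^*$ relativizes quantifiers to $S_1$ and replaces $\alpha_i$ by $\tilde{\alpha}_i$. A routine induction on formulas then shows that $\mc{M} \models \varphi^*(\bar a)$ if and only if $\mc{A} \models \varphi(\bar a)$ for $\bar a \in S_1$. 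Hence $\mc{M} \models \varphi^*$, and so $T^+ \models \varphi^*$. The case $T \models \neg\varphi$ is symmetric and gives $T^+ \models (\neg\varphi)^* = \neg\varphi^*$. Alternatively, we could apply axiom (8) directly to $\neg\varphi \in T$, but the inductive argument is cleaner.

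Since $T^+$ decides each atomic sentence, it decides every Boolean combination of them, so it decides $\sigma'$ and therefore $\sigma$. Thus $T^+$ is complete. The only real content is Lemma \ref{lem:qe} in its form for sentences, with no free variables required, together with the observation that the language has no closed terms. The step needing most care is checking that every atomic $\mc{L}^*$-sentence is of the form $\varphi^*$, and this is where the absence of constants matters.
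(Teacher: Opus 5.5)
Your proof is correct and follows the paper's own argument: quantifier elimination for sentences in $\mc{L}^*$ (Lemma \ref{lem:qe}), the observation that the only atomic $\mc{L}^*$-sentences are the $\varphi^*$ for $\mc{L}$-sentences $\varphi$, and the fact that $T$ decides each $\varphi$, so $T^+$ decides each $\varphi^*$. Your extra detail, justifying $T^+ \models \varphi^*$ via the associated first-sort model rather than assuming $T$ is deductively closed, is a welcome precision but not a different route.
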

	\begin{proof}
		Given an $\mc{L}^+$-sentence $\theta$, we can write $\theta$ as a Boolean combination of atomic and negated atomic formulas from
		\[\cL^+ \cup \set{\vphi^*: \text{$\vphi$ an $\cL$-formula}}\]
		with no free variables. The only atomic formulas with no free variables are the $\varphi^*$ for $\varphi$ an $\mc{L}$-sentence. But since $T$ decides each such sentence $\varphi$, $T^+$ decides each such $\varphi^*$.
	\end{proof}
	
	We now return to prove Lemma \ref{lem:qe}.
	
	\begin{proof}[Proof of Lemma \ref{lem:qe}]
		Using the standard criterion for proving quantifier elimination, it suffices to prove the following. Let $\mc{A}$ be an $\mc{L}^*$-structure and $\mc{M},\mc{N} \models T^+$. Suppose that $\mc{A} \subseteq \mc{M}$ and $\mc{A} \subseteq \mc{N}$ as $\mc{L}^*$-structures. Let $\varphi(\bar{x},y)$ be a quantifier-free $\mc{L}^*$-formula. Let $\bar{a} \in \mc{A}$ and $u \in \mc{M}$ and suppose that $\mc{M} \models \varphi(\bar{a},u)$. We must show that there is $v \in \mc{N}$ such that $\mc{N} \models \varphi(\bar{a},v)$. We may assume that $u \notin \mc{A}$, or we would already be done.
		
		We begin by making some simplifications. After writing $\vphi$ in disjunctive normal form, one of the disjuncts must be true in $\mc{M}$ and so we can replace $\varphi$ by this disjunct. Thus we assume that $\vphi$
		is a conjunction of atomic and negated atomic formulas. We may also assume that $\varphi$ has at most one conjunct of the form $\psi^*$, as a conjunction $\psi_1^* \wedge \psi_2^*$ is equivalent to $(\psi_1 \wedge \psi_2)^*$. This includes any of the relations $R$ from $\mc{L} - \{\alpha_1,\alpha_2,\ldots\}$ as for such relations $R$ and $R^*$ are equivalent. Then we can write
		\[\vphi(\bar{a}, u) \equiv \psi^*(\bar{s}(\bara,u)) \wedge \cdots\]
		where $\bar{s}$ is a tuple of terms with $\bar{s}(\bar{a},u)$ in $S_1$ and $\cdots$ is a conjunction of equalities, inequalities, and relations $S_1$ and $S_2$ of terms in $\bara, u$. Note that all of the functions in the language are unary, and so all terms are unary, and in particular, no term can involve both part of $\bar{a}$ and $u$. We can also rewrite $\bar{a}$ as $\bar{a},\bar{b}$ where $\bar{a}$ is in the first sort and $\bar{b}$ is in the second sort. We can also expand $\bar{a}$ to include all of the images of $\bar{b}$ under any terms (other than those that map $b_i$ to itself), as each element of $S_2$ has only finitely many images under terms, and each term maps an element either to itself or to $S_1$. After this, we may rewrite our formula as
		\[\vphi(\bar{a},\bar{b}, u) \equiv \psi^*(\bara,s_1(u),\ldots,s_n(u)) \wedge \cdots\]
		where $s_1,\ldots,s_n$ are terms with $s_i(u) \in S_1$ and the other conjuncts are as before.
		
		Now we make some simplifications involving the other conjuncts. Any atomic formula only involving $\bara$ is passed to $\cN$ trivially, so we can ignore these. Since $S_1$ is equivalent to $\neg S_2$, and vice versa, we may assume that they appear only positively. Since the only function symbols in the language are unary, each term has only a single variable. Any term applied to one of the $a_i$ must be $a_i$. Also, for any term $t$ and any $i$ either $t(b_i) = b_i$ or there is $j$ such that $t(b_i) = a_j$. Thus remaining atomic formulas involving $u$ are:
		\begin{multicols}{2}
			\begin{itemize}
				\item $a_i = t(u)$ or $b_i = t(u)$
				\item $a_i \neq t(u)$ or $b_i \neq t(u)$
				\item $t_1(u) = t_2(u)$
				\columnbreak
				\item $t_1(u) \neq t_2(u)$
				\item $t(u) \in S_1$,
				\item $t(u) \in S_2$.
			\end{itemize}
		\end{multicols}
		\noindent We also make several remarks about terms. Given $\mc{M} \models T^+$, $x \in \mc{M}$, and any term $t$:
		\begin{enumerate}
			\item Either $\mc{M} \models t(x) \in S_1$ or $\mc{M} \models t(x) = x$.
			\item If $\mc{M} \models x \in S_1$ then $t(x) = x$.
		\end{enumerate}
		We will use these implicitly.
		
		We now split into cases according to whether $u \in S_1$ or $u \in S_2$, and in the latter case into subcases depending on whether $u$ is a witness to some 
		$\alpha_k$ or not. In each case we find $v \in \mc{N}$ which fits into the same case (e.g., $\mc{M} \models u \in S_1$ if and only if $\mc{N} \models v \in S_1$).
		
		\begin{description}
			\item[Case 1: $u \in S_1$.]
			Since $u \in S_1$, $t(u) = u$ for any term $t$. This simplifies all of the atomic and negated atomic formulas in the conjunction. Any equalities and inequalities between $u$ and $\bar{a}$ can be moved into $\psi$, and we cannot have any equalities between $u$ and $\bar{b}$. Also, $u \in S_1$ and $u \notin S_2$. As we will choose $v \in S_1$, all of these remaining atomic and negated atomic formulas will hold. Thus, as long as we pick $v$ in $\mc{N}$ with $v \in S_1$ and such that $\mc{N} \models \psi^*(\bar{a},v)$ then we are done.
			
			We have
			\[\cM \models \psi^*(\bar{a}, u) \Longrightarrow \cM \models \exists y \; \psi^*(\bara, y) \Longleftrightarrow \cM \models (\exists y\; \psi(\barx, y))^*(\bara)\]
			Since the latter formula is quantifier-free, we have that
			\[\cM \models (\exists y \; \psi(\barx, y))^*(\bara) \Longrightarrow \cN \models (\exists y \;\psi(\barx, y))^*(\bara) \Longrightarrow \cN \models \exists y \; \psi^*(\bara, y)\]
			Thus there is some $v \in \mc{N}$ such that $\cN \models \psi^*(\bar{a}, v)$. This implies that $v \in S_1$.
			
			\item[Case 2: $u \in S_2$, $u$ is a witness to $\alpha_k(\barc)$ for $\barc \in \mc{M}$.]
			
			By saying that $u$ is a witness to $\alpha_k(\barc)$ in $\mc{M}$ we mean that $f_k(u) = \bar{c}$. We begin by splitting $\barc$ into part that is in $\bar{a}$ and part that is not. After rearranging, we may replace $\bar{c}$ by $(\bar{a}',\bar{c})$ where $\bar{a}' = (a_{i_1},\ldots,a_{i_p})$ is a subtuple of $\bar{a}$ and $\bar{c} = (c_1,\ldots,c_q)$ has no entries in common with $\bar{a}$. Also, the only terms $s(u)$ are (up to equivalence, when applied to $u$) the identity and $s = f_{k,m}$ for each $m$. Rearranging the $\{ f_{k,1},\ldots,f_{k,n_k} \}$ we may assume that $f_{k,m}(u) = a_{i_m}$ for $1 \leq m \leq p$ and $f_{k,p+m}(u) = c_{m}$ for $1 \leq m \leq q$. Moreover, for any $v$ in $S_2$, as long as $f_{k,m}(v) \in S_1$ for some $m$, then the terms $s(v)$ up to equivalence are the same. So we may rewrite our formula as
			\[\vphi(\bara, \barb, u) \equiv \psi^*(\bara,\bar{c}) \wedge \bigwedge_{1 \leq m \leq p} f_{k,m}(u) = a_{i_m} \wedge \bigwedge_{1 \leq m \leq q} f_{k,p + m}(u) = c_{m} \wedge \cdots.\]
			Thus we have
			\[\cM \models \tilde{\alpha}_k(\bar{a}',\bar{c}) \wedge \psi^*(\bar{a}, \bar{c}) \wedge \Delta(\bar{a},\bar{c}) \]
			where $\Delta$ is the formula that expresses the equalities and inequalities between all elements of $\bar{a}$ and $\bar{c}$, including that $a_i \neq c_j$ for all $i, j$.
			Thus
			\[\cM \models \exists \bar{y} \; [ \tilde{\alpha}_k(\bar{a}',\bar{y}) \wedge \psi^*(\bar{a}, \bar{y}) \wedge \Delta(\bar{a},\bar{y}) ].\]
			We can rewrite this as
			\[\cM \models ( \exists \bary \; [\alpha_k(\bar{x}',\bar{y}) \wedge \psi(\bar{x}, \bar{y}) \wedge \Delta(\bar{x},\bar{y})] )^*(\bar{a})\]
			where $\bar{x}' = (x_{i_1},\ldots,x_{i_p})$. Since this is a  quantifier-free fact about $\bar{a} \in \mc{A}$, it is also true in $\mc{N}$,
			\[\cN \models ( \exists \bary \; [\alpha_k(\bar{x}',\bar{y}) \wedge \psi(\bar{x}, \bar{y}) \wedge \Delta(\bar{x},\bar{y})] )^*(\bar{a}).\]
			Thus
			\[ \cN \models \exists \bary \; [\alpha_k(\bar{a}',\bar{y}) \wedge \psi^*(\bar{a}, \bar{y}) \wedge \Delta(\bar{a},\bar{y}) ].\]
			Letting $\bar{d} \in \mc{N}$ be the witness to this existential quantifier over $\bar{y}$, we have
			\[ \cN \models \tilde{\alpha}_k(\bar{a}',\bar{d}) \wedge \psi^*(\bar{a}, \bar{d}) \wedge \Delta(\bar{a},\bar{d}).\]
			Note that the $d_i$ are distinct from the $a_j$ and $c_i = c_j$ if and only if $d_i = d_j$.
			
			We also have $\mc{N} \models \tilde{\alpha}_k(\bar{a}',\bar{d})$. Let $v \in \mc{N}$ be the witness to this, i.e., the unique element such that $f_k(v) = (\bar{a}',\bar{d})$. So we have
			\[ \mc{N} \models \bigwedge_{1 \leq m \leq p} f_{k,m}(v) = a_{i_m} \wedge \bigwedge_{1 \leq m \leq q} f_{k,p + m}(v) = d_{m}.\]    
			We also have that $v \notin \bar{b}$, argued as follows. In that case, we would have $v \in \mc{A}$ and so $v \in \mc{M}$. By our assumption that $\bar{a}$ contains the non-trivial images of $\bar{b}$ under all terms, we would have that $\bar{c}$ and $\bar{d}$ are the empty tuple. But $\alpha_k(\bar{a}')$ can have only one witness in $\mc{M}$, and both $u$ and $v$ would be witnesses. This contradicts our assumption that $u \notin \mc{A}$.
			
			Now we must consider all of the other conjuncts in our formula and use the fact that they were true for $u$ to show that they are true for $v$. It will be useful to note that for any term $t$, $t(u) = u$ if and only if $t(v) = v$, and otherwise there is some $m$ such that $t(u) = f_{k,m}(u)$ and $t(v) = f_{k,m}(v)$. Thus $t(u) = a_i$ if and only if $t(v) = a_i$, and $t(u) = c_i$ if and only if $t(v) = d_i$.
			\begin{itemize}
				\item Given $a_i = t(u)$ for some term $t$, we must have that $t(u) = f_{k,m}(u)$ for some $m$ as otherwise $t(u)$ would not be in $S_1$. Thus $a_i = a_{i_m} = f_{k,m}(v) = t(v)$.
				
				\item Given $b_i = t(u)$, since $u$ and $b_i$ are in $S_2$, we must have $b_i = u$. But this contradicts our assumption that $u \notin \mc{A}$.
				
				\item Given $a_i \neq t(u)$, one of the following must be true:
				\begin{itemize}
					\item $t(u) = u$ and $t(v) = v \neq a_i$.
					\item $t(u) = f_{k,p+m}(u) = c_m$ for some $1 \leq m \leq q$. Then $t(v) = f_{k,p+m}(v) = d_m$ and $d_m \neq a_i$.
					\item $t(u) = f_{k,m}(u) = a_{i_m}$ for some $1 \leq m \leq p$, and $a_{i_m} \neq a_i$. Then $t(v) = f_{k,m}(v) = a_{i_m} \neq a_i$.
				\end{itemize}
				
				\item Given $b_i \neq t(u)$ for some term, we have that either $t(v) \in S_1$ or $t(v) = v$. In the former case, we automatically have that $b_i \neq t(v)$. In the latter case, we have two subcases. If $\bar{d}$ is a non-empty tuple, then since $\bar{d}$ does not share elements with $\bara$ and is generated by $v$, $v \neq b_i$, since $b_i$ can only generate itself and elements of $\bara$. If $\bar{d}$ is empty, then so is $\barc$. If $\barc$ is empty, then $u$ is the unique witness in $\mc{M}$ to $\alpha_k(\bar{a}')$, and so $b_i$ is not a witness to $\alpha_k(\bar{a}')$. Since $v$ is the unique witness in $\mc{N}$ to $\alpha_k(\bar{a}')$, $b_i \neq v$.
				
				\item Given $t_1(u) = t_2(u)$ (or $t_1(u) \neq t_2(u)$), since terms behave essentially the same on $v$ as they do on $u$, (including that, e.g., $c_i = c_j$ if and only if $d_i = d_j$) we also have $t_1(v) = t_2(v)$ (or $t_1(v) \neq t_2(v)$).
				\item Given $t(u) \in S_1$ we must have $t(u) = f_{k,m}(u)$ and so $t(v) = f_{k,m}(v)$ is also in $S_1$.
				\item Given $t(u) \in S_2$ we must have $t(u) = u$, and so $t(v) = v$ is also in $S_2$.
			\end{itemize}
			
			Thus we have shown that $\mc{N} \models \varphi(\bar{a},\bar{b},v)$.
			
			\item[Case 3: $u \in S_2$, $u$ is not a witness to any $\alpha_k$.]
			For every term $t$, $t(u) = u$ is in $S_2$. Thus we may rewrite
			\[\vphi(\bara, \barb, u) \equiv \psi^*(\bara) \wedge \cdots\]
			and since $\psi^*(\bar{a})$ depends only on elements in $\mc{A}$ and hence is automatically true in $\mc{N}$, we may remove it and assume that $\vphi(\bara, \barb, u)$ is a conjunction of atomic and negated atomic formulas of the types described above.
			
			Since, by assumption, $u$ is not part of the tuple $\bar{b}$, and is not a witness, we can remove the equalities of the form $a_i = t(u)$. Since in $\mc{M}$ for every term $t$ we have $t(u) = u$, no inequalities $t_1(u) \neq t_2(u)$ can be true in $\mc{M}$, and so we can discard those as a possibility. We can also replace any equalities $t_1(u) = t_2(u)$ with two equalities of the form $t(u) = u$. Finally, since $t(u) = u$ is in $S_2$ in $\mc{M}$, we can discard atomic formulas $t(u) \in S_1$. We are left with the following:
			\begin{multicols}{3}
				\begin{itemize}
					\item $a_i \neq t(u)$ or $b_i \neq t(u)$
					\columnbreak
					\item $t(u) = u$
					\columnbreak
					\item $t(u) \in S_2$.
				\end{itemize}
			\end{multicols}
			Let $K$ be the maximum of the $k$ such that some $f_{k,i}$ shows up in one of the terms $t$ in our formula, and also so that any $b_i \in \bar{b}$ is a witness to some $\alpha_k$. It will suffice to choose $v$ to be a witness to some $\alpha_m$ for some $m > K$, and that such an element exists by assumption ($*$)---the original theory $T$ was assumed to contain elements that witness $\alpha_m$ for arbitrarily large $m$. In this case, for all terms $t$ appearing in our formula, we have $t(v) = v$. Also, $v \in S_2$, and $v$ does not appear among the $\bar{b}$, and so we are done.
		\end{description}
		
		This completes the three cases. We have proved quantifier elimination down to $\cL^*$.
	\end{proof}
	
	The intention when taking a Marker extension is that given $\mc{A}$ an $\mc{L}$-structure we should be able to treat the relations $\alpha_i$ as being existential. If an existential formula is true in a substructure, it is true in a superstructure; but it may be false in a substructure and yet true in a superstructure. We define a new notion of substructure, which we call a weak substructure, to take this into account.
	
	\begin{definition}
		Let $\cA$ and $\cB$ be $\cL$-structures. We say $\cA$ is a weak substructure of $\cB$, denoted as $\cA \subseteq_w \cB$ if 
		\begin{enumerate}
			\item The domain of $\mc{A}$ is contained in the domain of $\mc{B}$;
			\item if $\cA \models \alpha_i(\bara)$ for $\bar{a} \in \mc{A}$, then $\cB \models \alpha_i(\bara)$; and
			\item for any other relation $R \in \cL - \set{\alpha_1, \alpha_2, \ldots}$ and $\bar{a} \in \mc{A}$, $\mc{A} \models R(\bar{a})$ if and only if $\mc{B} \models R(\bar{a})$.
		\end{enumerate}
	\end{definition}
	
	By taking Marker extensions, we can realize weak substructures as true substructures.
	
	\begin{lemma}\label{lem:weak-substructure}
		If $\cA \subseteq_w \cB$ then $\cA^+ \subseteq \cB^+$.
	\end{lemma}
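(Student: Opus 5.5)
The plan is to exhibit $\cA^+$ concretely inside $\cB^+$ by choosing the witnesses compatibly, and then to check each symbol of $\cL^+$. The only choice in forming a Marker extension is the identity of the second-sort witnesses, and $\cA^+$ is determined only up to isomorphism over $\cA$. We may therefore build $\cA^+$ as follows. Its first sort is the domain of $\cA$, which by clause (1) of weak substructure is a subset of the first sort of $\cB^+$. For each $i$ and each $\bar{a} \in \cA$ with $\cA \models \alpha_i(\bar{a})$, clause (2) gives $\cB \models \alpha_i(\bar{a})$, so $\cB^+$ has a unique witness $b_{i,\bar{a}} \in S_2$ with $f_i(b_{i,\bar{a}}) = \bar{a}$. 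We use exactly these elements as the second sort of $\cA^+$. The domain of $\cA^+$ is then a subset of the domain of $\cB^+$. (If the convention is that $\cA^+$ has a fixed domain, we instead define the obvious embedding.)

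Next we verify that the inclusion preserves every symbol of $\cL^+$.

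\emph{Sort predicates.} $S_1$ and $S_2$ agree, since elements of $\cA$ are in $S_1$ in both structures and the chosen witnesses are in $S_2$ in both.

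\emph{Old relations.} For $R \in \cL - \{\alpha_i\}$, both structures make $R$ false on any tuple meeting $S_2$. On tuples from $\cA$, $R$ agrees by clause (3).

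\emph{Functions on the first sort.} For $x$ in the first sort, $f_{k,j}(x) = x$ in both structures.

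\emph{Functions on the witnesses.} For a witness $b = b_{i,\bar{a}}$, in $\cB^+$ we have $f_i(b) = \bar{a}$ and $f_{k,j}(b) = b$ for $k \neq i$, since each element witnesses exactly one relation. By construction $\cA^+$ assigns the same values. Hence the domain of $\cA^+$ is closed under the functions of $\cB^+$, and the functions restrict correctly.

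The main point to be careful about is that no conflict arises: distinct pairs $(i,\bar{a})$ must give distinct witnesses in $\cB^+$, and this holds because each element of $\cB^+$ witnesses only one relation. One should also note explicitly that the witness chosen in $\cB^+$ really does witness $\alpha_i(\bar{a})$, which is exactly what clause (2) guarantees. The converse direction, that $\alpha_i$ might hold in $\cB$ but fail in $\cA$, does no harm: it only means $\cB^+$ has additional witnesses outside $\cA^+$, and substructure does not require the existential $\tilde{\alpha}_i$ to be reflected downward. This asymmetry is precisely why weak substructure suffices.
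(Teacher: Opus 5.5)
Your proposal is correct and takes essentially the same route as the paper: each witness in $\cA^+$ to $\alpha_i(\bar a)$ is sent to the unique witness of $\alpha_i(\bar a)$ in $\cB^+$, which exists by clause (2) of weak substructure, and injectivity follows because each element witnesses exactly one pair. The only differences are that you realize this embedding as a literal inclusion, and that you carry out the symbol-by-symbol check that the paper dismisses as ``not hard to see.''
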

	\begin{proof}
		Supposing that $\mc{A}$ is a weak substructure of $\mc{B}$, we must define an embedding of $\mc{A}^+$ into $\mc{B}^+$. The embedding is defined as follows. On the first sort $S_1$, we just have the embedding $\mc{A} \to \mc{B}$ as these make up the first sorts of $\mc{A}^+$ and $\mc{B}^+$ respectively. On the second sort $S_2$, any element $c$ of $\mc{A}^+$ is a witness that $\alpha_i$ holds of some tuple $\bar{a} \in \mc{A}$. Since $\mc{A}$ is a weak substructure of $\mc{B}$, $\alpha_i$ also holds of $\bar{a}$ in $\mc{B}$, and so there is a witness $d$ to this in $\mc{B}^+$. We map $c$ to $d$. Each $c$ witnesses $\alpha_i(\bar{a})$ for exactly one pair $i$ and $\bar{a}$, and there is only one witness $d$ to the same pair in $\mc{B}^+$. Thus this map $\mc{A}^+ \to \mc{B}^+$ is uniquely defined and injective. It is also not hard to see that it is an $\mc{L}^+$-embedding.
	\end{proof}
	
	The previous lemma also, of course, applies when $\mc{A} \subseteq \mc{B}$ is a true substructure.

	\begin{lemma}\label{lem:marker-subtheory}
		Let $S \models T$ as $\cL$-theories. Then $S^+ \models T^+$ as $\cL^+$-theories.
	\end{lemma}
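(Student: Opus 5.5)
Since $S^+$ and $T^+$ are defined by the same axiom schemes (1)--(7), which do not depend on the underlying theory, the only thing to check is axiom scheme (8) for $T$: for each $\varphi \in T$ we must show $S^+ \models \varphi^*$. I will argue semantically, using the correspondence between models of $T^+$ and models of $T$ described after Lemma \ref{lem:consistent}. The argument does not need property ($*$) or Lemma \ref{lem:qe}.

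Fix $\mc{M} \models S^+$ and let $\mc{A}$ be the associated $\mc{L}$-structure. Its domain is the first sort $S_1$ of $\mc{M}$. Each relation of $\mc{L} - \{\alpha_i\}$ is read off directly, and each $\alpha_i$ is interpreted by the definable relation $\tilde{\alpha}_i$.

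The key step is the translation lemma: for every $\mc{L}$-formula $\varphi(\bar{x})$ and every $\bar{a}$ in the first sort,
\[ \mc{A} \models \varphi(\bar{a}) \iff \mc{M} \models \varphi^*(\bar{a}). \]
I prove this by induction on $\varphi$.
\begin{enumerate}
\item Atomic case, $\alpha_i$: this holds by the choice of interpretation, since $\alpha_i^* = \tilde{\alpha}_i$.
\item Atomic case, other relations $R$ and equality: these are unchanged by the translation. By axiom (3) they only hold of first-sort tuples, so they agree on $S_1$.
\item Boolean connectives: the translation commutes with them, so this case is immediate.
\item Quantifiers: $\exists \bar{x}$ becomes $\exists \bar{x}\,[S_1(\bar{x}) \wedge \cdots]$. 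This relativizes the quantifier to the first sort, which is exactly the domain of $\mc{A}$, so the induction hypothesis carries the equivalence through.
\end{enumerate}
This is the same fact used in the paper's remark that axioms (8) ensure $\mc{A} \models T$.

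Applying the translation lemma to sentences, $\mc{M} \models S^+$ gives $\mc{M} \models \psi^*$ for every $\psi \in S$, hence $\mc{A} \models S$. Since $S \models T$, we get $\mc{A} \models T$. Applying the translation lemma in the other direction then gives $\mc{M} \models \varphi^*$ for each $\varphi \in T$. Together with axioms (1)--(7), which $\mc{M}$ satisfies as a model of $S^+$, this shows $\mc{M} \models T^+$.

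The only real obstacle is the bookkeeping in the translation lemma. In particular, one must check that the clause asserting the free variables lie in $S_1$ is harmless, which holds because the parameters $\bar{a}$ are taken from the first sort.
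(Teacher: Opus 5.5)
Your argument is correct, but it takes a different route from the paper. The paper argues purely syntactically. Axioms (1)--(7) are identical for $S^+$ and $T^+$, and for (8) it just matches instances, implicitly reading (8) as ranging over all consequences of the theory. As printed, that proof is also reversed: it starts from an instance $\varphi^*$ of (8) for $S^+$ and infers ``since $S \models T$, $T \models \varphi$'', which does not follow. The intended argument is that $T \models \varphi$ gives $S \models \varphi$, so $\varphi^*$ is already an instance of (8) for $S^+$. That one-liner is complete only if theories are deductively closed, or if one already knows that $S \models \varphi$ implies $S^+ \models \varphi^*$. Your translation lemma, applied to an arbitrary model of $S^+$, is exactly a proof of that implication. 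So your version also covers the reading where (8) ranges only over the given axioms of $S$.

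One detail belongs in your bookkeeping: $\mc{A}$ is a structure, and the step from $\mc{A} \models S$ to $\mc{A} \models T$ is valid, only if the first sort of $\mc{M}$ is nonempty. This is automatic when $S$ contains $\exists x\,(x = x)$, since its translation is $\exists x\,[S_1(x) \wedge x = x]$. That covers, e.g., any deductively closed $S$, such as the complete theory used in the proof of Theorem~\ref{thm:main2}. It can fail otherwise. For $S = \emptyset$ and $T = \{\exists x\,(x = x)\}$, a single second-sort element fixed by every $f_{i,j}$ is a model of $S^+$ but not of $T^+$. The paper's passage to ``the associated model'' makes the same tacit assumption.
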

	\begin{proof}
		Axioms ($1$) and ($2$) are common to the Marker extension of any theory. Axioms ($3$)-($7$) are satisfied by both $S^+$ and $T^+$ since those 
		axioms only depend on $\cL$ and set $\set{\alpha_1, \alpha_2, \ldots}$ of relations to be made existential. Consider an instance of Axiom $(8)$, 
		say $S^+ \models \vphi^*$ where $S \models \varphi$. Since $S \models T$, $T \models \varphi$ and so $T^+ \models \varphi^*$ as an instance of Axiom ($8$).
	\end{proof}
	
	\section{Incomplete theories}\label{sec:four}
	
	In this section we will prove the anti-classification result for incomplete theories.
	
	\secondtheorem*
	
	\begin{proof}
		Given a tree $\treeT \subseteq \mathbb{N}^{<\mathbb{N}}$ we may assume, without changing whether or not $\treeT$ is well-founded, that each non-leaf node $\sigma \in \treeT$ has all of its possible children $\sigma i$ in $\treeT$. Uniformly computably in $\treeT$ we will define a language $\mc{L} = \mc{L}_\treeT$ and an (incomplete) theory $T = T_\treeT$ such that $\treeT$ is well-founded if and only if $T_{\treeT}$ is relatively decidable. Thus the set
		\[ \{ (\mathcal{L},T) : \text{the $\mathcal{L}$-theory $T$ is relatively decidable}\}\]
		will be $\boldsymbol{\Pi}^1_1$-Wadge-complete. One can see from the construction below that the language always consists of the same number of symbols of the same arities, and so we could make the construction with a fixed language. Indeed, following \cite{BazhenovHT} we conjecture that one could take the language $\mathcal{L}$ to be finite.
		
		Fix a tree $\treeT$ with the property that each non-leaf node has all of its children. Let
		\[ \cL = \set{U_\sigma(x): \sigma \in \treeT} \cup \set{A_i(x): i \in \omega} \cup \set{V_{i,\sigma}(\bary, x): i \in \omega, \sigma \in \treeT}.\]
		For each $k$ let
		\[ \mc{L}_k = \set{U_\sigma: \sigma \in \set{0, 1, 2, \ldots, k - 1}^{\leq k} \cap \treeT} \cup \{A_i : i < k\} \cup \{V_{i, \sigma} : i < k, \sigma \in \set{0, 1, \ldots, k - 1}^{\leq k} \cap \treeT\}.\]
		The $U_\sigma$ and $A_i$ are unary relations. The arities of the $V_{i, \sigma}$ must be defined inductively. Enumerate the 
		universal $\mathcal{L}$-formulas as $P_i(\barx)$  with $P_i(\barx)$ an $\mathcal{L}_{i}$-formula. Thus $P_i(\barx)$ does not involve any of the symbols $V_{i,\sigma}$. Then we may construct $\mc{L}$ so that the arity of $\bary$ in $V_{i, \sigma}(\bary,x)$ is the same as in $P_i(\bary,x)$. Let $T$ be the theory with the following axioms:
		\begin{enumerate}
			\item $\exists x \; U_{\varnothing}(x)$,
			\item for $\sigma \in \treeT$ a non-leaf node,
			\[ \left[ \; U_\sigma(x) \; \wedge \; \bigwedge_{j = 0}^i \forall y \; \neg U_{\sigma j}(y) \; \right] \;  \longrightarrow \; \forall \bary \; \left[ \; P_i(\bary) \longleftrightarrow V_{i, \sigma}(\bary, x) \; \right],\]
			\item for $\tau \in \treeT$ a leaf node,
			\[  U_\tau(x) \; \longrightarrow \; \forall \bary \; \left[\; P_i(\bary) \longleftrightarrow V_{i,\tau}(\bary, x)\;\right].\]
		\end{enumerate}
		We begin by showing that $T$ is satisfiable.
		
		\begin{lemma}
			$T$ is satisfiable and has an infinite model.
		\end{lemma}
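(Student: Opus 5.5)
We construct an explicit infinite model. Take any infinite set $M$ as domain and decide first how to interpret the $U_\sigma$: make $U_\varnothing$ true of exactly one element $m_0$, and make every $U_\sigma$ with $\sigma \neq \varnothing$ empty. Axiom (1) then holds. Axiom (3) is vacuous for leaves $\tau \neq \varnothing$ because $U_\tau$ is empty. If $\varnothing$ is itself a leaf (the tree is just the root), axiom (3) for $\tau = \varnothing$ says that for every $i$ and every $\bar{y}$, $P_i(\bar{y}) \leftrightarrow V_{i,\varnothing}(\bar{y},m_0)$. Otherwise $\varnothing$ is a non-leaf node and all its children $j$ lie in $\treeT$ with $U_j$ empty. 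The hypothesis of axiom (2) for $\sigma = \varnothing$ then holds at $x = m_0$ for every $i$, and the axiom demands the same equivalences $P_i(\bar y) \leftrightarrow V_{i,\varnothing}(\bar y, m_0)$. Axiom (2) for other non-leaf $\sigma$ is vacuous. So in either case we need only interpret the remaining symbols so that $V_{i,\varnothing}(\bar{y},m_0) \leftrightarrow P_i(\bar{y})$ holds for all $i$ and $\bar y$.

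We achieve this by defining the interpretations by induction on $i$, using the stratification of the language. Interpret every $A_i$ arbitrarily, say as empty. Interpret every $V_{i,\sigma}$ with $\sigma \neq \varnothing$ as empty. At stage $i$, the formula $P_i$ is an $\mathcal{L}_i$-formula, so the only $V$-symbols it can mention are the $V_{i',\sigma}$ with $i' < i$, and these have already been interpreted. Thus $P_i(\bar y)$ has a well-defined truth value for each tuple $\bar y \in M$. We set $V_{i,\varnothing}(\bar{y},x)$ to hold exactly when $x = m_0$ and $P_i(\bar{y})$ holds; its behaviour at $x \neq m_0$ is irrelevant. The interpretation of $V_{i,\varnothing}$ is fixed once and never changed later, and later symbols do not occur in $P_i$. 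Hence the required equivalence for $P_i$ still holds in the final structure.

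The only point that needs care is well-foundedness of this recursion: $P_i$ must not refer to $V_{i,\varnothing}$ itself or to any later $V_{i',\sigma}$. That is exactly guaranteed by the choice of the enumeration, with $P_i$ an $\mathcal{L}_i$-formula and $\mathcal{L}_i$ containing only $V_{i',\sigma}$ for $i' < i$. With this in hand the recursion is legitimate, and all axioms hold in the resulting structure, which is infinite since $M$ was. One could also argue by compactness, satisfying finitely many axioms in $\mathcal{L}_k$-reducts, but the direct construction already suffices.
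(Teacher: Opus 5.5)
Your proof is correct and follows essentially the same route as the paper: you build an explicit infinite model and define the $V_{i,\sigma}$ by recursion on $i$. This is legitimate because $P_i$ is an $\mathcal{L}_i$-formula and so mentions only the $V_{i',\sigma}$ with $i'<i$. The only difference is cosmetic: the paper makes $U_\varnothing$ and every $A_i$ true everywhere and arranges $V_{i,\sigma}(\bar{y},x)\leftrightarrow P_i(\bar{y})$ for every $\sigma$ and every $x$, so axioms (2) and (3) hold outright without your case analysis of which instances are non-vacuous.
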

		\begin{proof}
			We will define a model $\mc{A} \models T$ on an infinite domain. Set $U_\varnothing(x)$ for all $x \in \mc{A}$ and $\neg U_\sigma(x)$ for all $x \in \mc{A}$ and all $\sigma \in \treeT$, $\sigma \neq \varnothing$. This ensures that axiom (1) is true. We set $A_i(x)$ to be true for all $i$ and all $x \in \mc{A}$.
			
			We will define the $V_{i, \sigma}$ such that for all $x \in \mc{A}$,
			\[ \mc{A} \models \forall \bar{y} \; \left[\; P_i(\bary) \longleftrightarrow V_{i, \sigma}(\bar{y},x) \; \right].\]
			This will ensure that the axioms of types (2) and (3) are all true. Because the symbols $V$ are included in the formulas $P$, we must define the $V$'s recursively. We begin with $P_0$ which uses only $U_\varnothing$ and the symbol for equality, so that we have already determined whether $\mc{A} \models P_0(\bary)$ for each $\bary$. We can thus set, for each $\bary$ and $x$, 
			$\mc{A} \models V_{0, \sigma}(\bar{y},x)$ if and only if $\mc{A} \models P_0(\bary)$. Now supposing that we have defined the $V_{i, \sigma}$ for $i < k$, we define the $V_{k, \sigma}$. We have already at this point determined whether $\mc{A} \models P_k(\bary)$ for each $\bary$, and so can again set $\mc{A} \models V_{k, \sigma}(\bar{y},x)$ if and only if $\mc{A} \models P_k(\bary)$.
		\end{proof}
		
		Now let $T^+$ be the Marker extension of $T$ where we make each $A_i$ and $U_\sigma$ universal (i.e., we make each of their negations existential). Since $T$ is consistent, so is $T^+$. We will now argue that $T^+$ is relatively decidable if and only if $\treeT$ is well-founded.
		
		\begin{lemma}
			Suppose that $\treeT$ is well-founded. Then $T^+$ is relatively decidable.
		\end{lemma}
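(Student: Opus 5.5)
The proof is non-uniform. From the given model we extract a finite amount of information: a node $\sigma$ of $\treeT$, an element $x_0$, and the number of ``junk'' second-sort elements. With this information every $\mc{L}$-formula about the underlying model becomes an atomic $V$-formula, and every $\mc{L}^+$-formula reduces to $\mc{L}$-formulas. Throughout, fix a countable $\mc{M}\models T^+$ and let $\mc{A}\models T$ be the associated $\mc{L}$-structure on the first sort, with each $A_i$ and $U_\sigma$ read through the definable (now universal) relations.

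\emph{Step 1: choosing $\sigma$ by well-foundedness.} Axiom (1) gives $U_\varnothing^{\mc{A}}\neq\varnothing$. Start at $\varnothing$. Whenever the current node $\tau$ has a child $\tau j$ with $U_{\tau j}^{\mc{A}}\neq\varnothing$, move to such a child. Since $\treeT$ is well-founded this process stops at some $\sigma$ with $U_\sigma^{\mc{A}}\neq\varnothing$, where either $\sigma$ is a leaf or $U_{\sigma j}^{\mc{A}}=\varnothing$ for every $j$. Fix $x_0\in U_\sigma^{\mc{A}}$. Then the hypothesis of axiom (2), or of axiom (3) if $\sigma$ is a leaf, holds for every $i$. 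Hence for all $i$,
\[ \mc{A}\models \forall\bar{y}\;\left[\;P_i(\bar{y})\longleftrightarrow V_{i,\sigma}(\bar{y},x_0)\;\right]. \]
The data $\sigma$ and $x_0$ are the non-uniform parameters.

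\emph{Step 2: every $\mc{L}$-formula is atomic over $x_0$.} By induction on complexity, I will effectively assign to each $\mc{L}$-formula $\varphi(\bar{z})$ a quantifier-free $\mc{L}$-formula $\varphi^\circ(\bar{z},x_0)$ built only from the $V$-symbols, equality, and Boolean connectives, such that $\mc{A}\models\varphi(\bar{z})\leftrightarrow\varphi^\circ(\bar{z},x_0)$. Note that the $V_{i,\sigma}$ are not among the relations made universal, so they are genuinely atomic in $\mc{M}$.
\begin{itemize}
\item Connectives are handled by applying the translation componentwise.
\item For $\varphi=\forall\bar{y}\,\psi$, the formula $\forall\bar{y}\,\psi^\circ(\bar{y},\bar{z},w)$ is a universal $\mc{L}$-formula, hence equals $P_i(\bar{z},w)$ for a computably found $i$. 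So $\varphi$ is equivalent to $V_{i,\sigma}(\bar{z},x_0,x_0)$.
\item Atomic $A_j$ and $U_\tau$ facts are first rewritten as universal formulas, e.g.\ $A_j(z)$ as $\forall y\,(y=y\to A_j(z))$, and then treated the same way.
\end{itemize}
Since the $V$-relations are given in the atomic diagram of $\mc{M}$, this decides every $\mc{L}$-formula on first-sort tuples. The procedure is uniform once $\sigma$ and $x_0$ are fixed, so we obtain $\Diag_{el}(\mc{A})$ computably from the atomic diagram of $\mc{M}$.

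\emph{Step 3: lifting to $\mc{L}^+$.} This is where I expect the main obstacle, because property ($*$) fails here: all $A_i$ can hold everywhere. Consequently Lemma \ref{lem:qe} cannot be applied directly. The first thing I would try is to re-add trivial relations as in the footnote, so that ($*$) holds and Lemma \ref{lem:qe} applies. Then each $\mc{L}^+$-formula is $T^+$-provably equivalent to a Boolean combination of quantifier-free $\mc{L}^+$-formulas and formulas $\varphi^*$; such an equivalence can be found by searching through consequences of $T^+$, and $T^+$ is computable from $T$. The quantifier-free parts are read off the atomic diagram, and the $\varphi^*$ parts are decided by Step 2.

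If that route fails, I would instead translate directly. Each element of $S_2$ is either the unique witness of some $\neg A_i(\bar{a})$ or $\neg U_\tau(\bar{a})$, where $\bar{a}$ lies in the first sort, or it is a non-witness. Accordingly, an $\mc{L}^+$-quantifier over $S_2$ splits into two parts. The first is a quantifier over first-sort tuples together with the $\mc{L}$-formula $\neg A_i$ or $\neg U_\tau$, which Step 2 decides. The second is a quantifier over the non-witness elements, which are indiscernible from one another. Their number $n\in\omega\cup\{\omega\}$ is taken as one more non-uniform parameter; for a given formula only the truncation $\min(n,N)$ for $N$ large relative to the formula's quantifier depth matters. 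Carrying out this elimination carefully, including the interaction with the unary functions $f_{i,j}$, is the technical heart of the argument. Once it is done, the translated formula is decided by Step 2, and so $T$ (or $T^+$) together with the atomic diagram of $\mc{M}$ computes the elementary diagram of $\mc{M}$.
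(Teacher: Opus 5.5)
Your Steps~1 and~2 are the paper's argument: well-foundedness yields $\sigma$ and $x_0$, and every $\mc{L}$-formula is then equivalent in $\cA$ to some $V_{i,\sigma}(\bar{y},x_0)$, with the index found using $T$. Your worry in Step~3 is also justified. The paper simply cites Lemma~\ref{lem:qe} for $T^+$, but $T$ does not have ($*$). Your reason, that all $A_i$ can hold everywhere, is not quite enough, since ($*$) also counts the relations $\neg U_\tau$. But one can make every $A_i$ and every $U_\tau$ true of every element. The antecedent of each axiom~(2) then fails because $\sigma 0\in\treeT$, and for leaves $\tau$ one defines $V_{i,\tau}(\bar{y},x)\leftrightarrow P_i(\bar{y})$ by recursion on $i$. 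For this $\cA$ the second sort of $\cA^+$ is empty. So $\cA^+$ and $\cA^+_1$ are models of $T^+$ that agree on every quantifier-free $\mc{L}^*$-sentence (these are Boolean combinations of $\varphi^*$ with $\varphi$ an $\mc{L}$-sentence) but disagree on $\exists x\,S_2(x)$. Thus $T^+$ genuinely lacks quantifier elimination in $\mc{L}^*$.

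However, your proposal does not close this gap, so Step~3 remains unproved.

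Route~1 proves relative decidability of a different theory. Once always-true $\beta_m$ are made existential, their witnesses hide exactly the information that can be visible in models of $T^+$, namely the number of non-witnesses. There is no evident way to recover $\Diag_{el}(\cM)$ from the elementary diagram of the enlarged structure. This would repair Theorem~\ref{thm:main2} by changing the construction, but it does not prove the lemma for this $T^+$.

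Route~2 has the right idea but leaves its core undone, and that core is not routine. A quantifier over $S_2$ is an infinite disjunction over which relation is witnessed. Witnesses of relations whose $f_i$ do not occur in the formula behave like non-witnesses, so the relevant count depends on $\Th(\cA)$, not only on $n$.

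A clean finish uses what you already have. By Step~2, $T$ together with the atomic diagram of $\cM$ computes $\Th(\cA)$ (given $\sigma,x_0$), and $\cM\models\Th(\cA)^+$. Now take one more finite piece of non-uniform data.
\begin{enumerate}
\item If $\cA$ realizes infinitely many of the relations $\neg A_i,\neg U_\tau$, then $\Th(\cA)$ has ($*$). Lemma~\ref{lem:qe} for $\Th(\cA)^+$ then gives the search-and-evaluate procedure exactly as in the paper.
\item Otherwise, record the finitely many realized relations. Then only finitely many $f_i$ are non-trivial in $\cM$, and ``$x$ is a non-witness'' is quantifier-free definable. Either name the finitely many non-witnesses by new constants (adding the axiom that there are no others), or add axioms saying there are infinitely many.
\end{enumerate}
Cases~1 and~2 of the proof of Lemma~\ref{lem:qe} never use ($*$). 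Case~3 either cannot occur, because the named non-witnesses lie in every substructure, or is handled by choosing a fresh non-witness in $\mathcal{N}$. So quantifier elimination, and with it the same procedure, goes through for this finite extension of $\Th(\cA)^+$.
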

		\begin{proof}
			Let $\mc{A}^+ \models T^+$ and let $\mc{A}$ be the corresponding model of $T$. We have that $\mc{A} \models \exists x \; U_\varnothing(x)$. Since the tree $\treeT$ is well-founded, there must either be some $\sigma \in \treeT$ a non-leaf node such that $\mc{A} \models \exists x \; U_\sigma(x)$ and for each $k$, $\mc{A} \models \forall x \; \neg U_{\sigma k}(x)$, or some $\tau \in \treeT$ a leaf node such that $\mc{A} \models \exists x \; U_\tau(x)$.
			
			In the first case, let $c$ be such that $\mc{A} \models U_\sigma(c)$. Then, following axioms (2), we have for all $i$ that
			\[ \mc{A} \models \forall \bar{y} \; \left[ \; P_i(\bar{y}) \longleftrightarrow V_{i, \sigma}(\bary,c) \; \right].\]
			In the second case, let $c$ be such that $\mc{A} \models U_{\tau}(c)$. Then, following the axioms (3), we also have that for all $i$
			\[ \mc{A} \models \forall \bar{y} \; \left[ \; P_i(\bar{y}) \longleftrightarrow V_{i, \tau}(\bary,c) \; \right].\]
			From here the argument is the same in both cases. In any computations below, we fix the element $c$ and $\sigma$ (or $\tau$, but below we will write $\sigma$) non-uniformly.
			
			Each universal $\mc{L}$-formula is equivalent to a quantifier-free $\mc{L}$-formula $V_{i, \sigma}(\bar{y},c)$ with parameter $c$. We can argue inductively that this implies that each $\mc{L}$-formula $\varphi(\bary)$ is equivalent to an $\mc{L}$-formula $V_{i_\varphi,\sigma}(\bar{y},c)$. (With $\varphi(\bar{y})$ in normal form, work from the inside out replacing each universal formula by some $V_{i,\sigma}$ and each existential formula by some $\neg V_{i,\sigma}$.)
			Using $T^+$ (which is Turing-equivalent to $T$) for each $\varphi$ we can compute such an $i_\varphi$.
			
			Given an $\mc{L}^+$-formula $Q(\bary)$, by Lemma \ref{lem:qe} there is an equivalent quantifier-free formula $P(\bary)$ in the language $\mc{L}^+ \cup \{ \varphi^* : \text{$\varphi$ an $\mc{L}$-formula}\}$. Using $T$, we can compute such a formula. Each $\varphi^*(\bary)$ is equivalent in $\mc{A}^+$ to the $\mc{L}^+$-formula $V_{i_\varphi,\sigma}^*(\bary,c)$, which is the same as $V_{i_\varphi,\sigma}(\bary,c)$ as they were not made universal in the Marker extension. Thus $Q(\bary)$ is equivalent in $\mc{A}^+$ to a quantifier-free $\mc{L}^+$-formula with parameter $c$. Thus the atomic diagram of $\mc{A}^+$, together with $T$, can compute the elementary diagram of $\mc{A}^+$.
		\end{proof}
		
		The more difficult case is the case that $\treeT$ is ill-founded. In this case, we will show that there is a complete theory $S^+$ extending $T^+$ which is not relatively decidable. This will imply that $T^+$ is not relatively decidable. The advantage of finding a completion of $T^+$ is that we can apply our characterization for when a complete theory is relatively decidable. We will apply one direction of that characterization, which is that we will show that there is no formula $\varphi(\bar{x})$ such that 
		\begin{itemize}
			\item $S^+ \models \exists \bar{x} \; \varphi(\bar{x})$, and
			\item the $\mc{L}^+ \cup \{\bar{c}\}$-theory $S^+ \cup \{\varphi(\bar{c})\}$ is model complete.
		\end{itemize}
		The theory $S^+$ will be defined from a fixed path through $\treeT$.
		
		Instead of directly finding a completion of $T^+$, we will first find a completion of $T$ and then take its Marker extension, which is justified by Lemma \ref{lem:marker-subtheory}. Suppose that $\treeT$ has an infinite path $\pi$. Let $T_\pi$ be the theory with the following axioms.
		\begin{enumerate}
			\item for each $\sigma \nprec \pi$,
			\[\neg \exists x \; U_\sigma(x)\]
			\item for each $\sigma \prec \pi$, with $\ell$ such that $\sigma \ell \prec \pi$, and $i < \ell$,
			\[ U_\sigma(x) \longrightarrow \forall \bary \; \left[ \; P_i(\bary) \longleftrightarrow V_{i, \sigma}(\bary, x) \; \right].\]
		\end{enumerate}
		Note that $T_\pi \cup \{\exists x \; U_\sigma(x) : \sigma \prec \pi\} \models T$.\footnote{We omit $\exists x U_\sigma(x)$ from $T_\pi$ because we will be working with Fra\"{i}ss\'{e} limits and will want some associated theory to be universal. Nevertheless, as one might expect these sentences will end up satisfied by the Fra\"{i}ss\'{e} limits anyway.} Thus to find a completion of $T$ it suffices to find a completion of $T_\pi \cup \{\exists x \; U_\sigma(x) : \sigma \prec \pi\}$.
		
		We could attempt to take a model completion of $T_\pi$, which is a $\forall_2$ theory, but to do this we would have to show that the existentially closed models of $T_\pi$ are an elementary class. This is not straightforward and we are not sure if this is true. (The difficulty stems from the fact that the $P_i$ range over all universal formulas.) Instead, we will construct a completion of $T_\pi$ by Fra\"{i}ss\'{e} limits. The language $\mc{L}$ is infinite, so rather than trying to take a Fra\"{i}ss\'{e} limit of $T_\pi$ itself, we will find a completion of $T_\pi$ which is a union of Fra\"{i}ss\'{e} limits. And since $T_\pi$ is not a universal theory we cannot simply take the Fra\"{i}ss\'{e} limits of $T_\pi$ restricted to finite languages, but must instead do something more complicated.
		
		We recall some definitions from Fra\"{i}ss\'{e} theory. We work purely in the case where $\mc{L}$ is a finite relational language. Recall that the age of an $\mc{L}$-structure $\mc{A}$ is the set $\age(\mc{A})$ of all finite substructures of $\mc{A}$. Given $\mathbb{K}$ a class of (isomorphism types) of finite structures, we say that $\mathbb{K}$ is a Fra\"{i}ss\'{e} class if it has the Hereditary Property (HP), Amalgamation Property (AP), and the Joint Embedding Property (JEP). Then there is a unique countable homogeneous structure, called the Fra\"{i}ss\'{e} limit of $\mathbb{K}$, whose age is $\mathbb{K}$. We write $\flim(\mathbb{K})$ for the Fra\"{i}ss\'{e} limit of the class $\mathbb{K}$. Since we assume that the language was finite and relational, $\age(\mathbb{K})$ is automatically uniformly locally finite and hence the theory of the Fra\"{i}ss\'{e} limit is $\omega$-categorical and admits quantifier elimination. Given a universal theory $T$ we have the corresponding class $\mathbb{K}(T)$ of finite models of $T$. $\mathbb{K}(T)$ automatically has the HP.
		
		Given two Fra\"{i}ss\'{e} classes $\mathbb{K}_1$ and $\mathbb{K}_2$ in languages $\mc{L}_1$ and $\mc{L}_2$  respectively, we say that $\mathbb{K}_2$ is an \textit{expansion class} of $\mathbb{K}_1$ if $\mathbb{K}_1 = \mathbb{K}_2 \res_{\mc{L}_1} := \{ \mc{A} \res_{\mc{L}_1} \;|\; \mc{A} \in \mathbb{K}_2\}$, the class of reducts of structures in $\mathbb{K}_2$. If $\mathbb{K}_2$ is an expansion class of $\mathbb{K}_1$ we say that the pair $(\mathbb{K}_1,\mathbb{K}_2)$ is \textit{reasonable} if whenever $\mc{A}_2 \in \mathbb{K}_2$, $\mc{A}_1 = \mc{A}_2 \res_{\mc{L}_1}$ is the reduct to $\mc{L}_1$, and $f \colon \mc{A}_1 \to \mc{B}_1$ is an embedding of $\mc{A}_1$ into another $\mc{B}_1 \in \mathbb{K}_1$, there is an expansion of $\mc{B}_1$ to an $\mc{L}_2$-structure $\mc{B}_2 \in \mathbb{K}_2$ such that $f$ is an embedding $\mc{A}_2 \to \mc{B}_2$. The Fra\"{i}ss\'{e} limit of $\mathbb{K}_1$ is the reduct to $\mc{L}_1$ of the Fra\"{i}ss\'{e} limit of $\mathbb{K}_2$ if and only if $(\mathbb{K}_1,\mathbb{K}_2)$ is \textit{reasonable}.\footnote{This definition seems to mostly appear in work on big Ramsey degrees where this fact is used without proof. However one can find a proof in Proposition 5.3 of \cite{Zucker}.}

		Recall that
		\[ \mc{L}_k = \set{U_\sigma: \sigma \in \set{0, 1, 2, \ldots, k - 1}^{\leq k} \cap \treeT} \cup \{A_i : i < k\} \cup \{V_{i, \sigma} : i < k, \sigma \in \set{0, 1, \ldots, k - 1}^{\leq k} \cap \treeT\}.\]
		Note that $\mc{L}_0 = \{U_\varnothing\}$. Let $T_k = T_\pi \res \mc{L}_k$ be the restriction of $T_\pi$ to the axioms of $T_\pi$ with symbols from $\mc{L}_k$, namely with the following axioms from $T_\pi$:
		\begin{enumerate}
			\item[($1'$)] for each $\sigma \nprec \pi$ and $\sigma \in \{0,1,2,\ldots,k-1\}^{\leq k}$,
			\[\neg \exists x \; U_\sigma(x),\]
			\item[($2'$)] for each $\sigma \prec \pi$, $\sigma \in \{0,1,2,\ldots,k-1\}^{\leq k}$, with $\ell$ such that $\sigma\ell \prec \pi$, and $i < \min(\ell,k)$,
			\[ U_\sigma(x) \longrightarrow  \forall \bary \; \left[ \; P_i(\bary) \longleftrightarrow V_{i, \sigma}(\bary, x) \; \right].\]
		\end{enumerate}
		Any axiom of $T_\pi$ which does not appear as an axiom of $T_k$ includes some symbol from $\mc{L} - \mc{L}_k$. (We do not worry at this point about whether $T_\pi$ has some consequences in $\mc{L}_k$ which are proven by axioms not in $\mc{L}_k$. This will be resolved by Claim \ref{claim:three}.)
		
		We will recursively define universal $\mc{L}_k$-theories $T_k^*$ for $k \in \mathbb{N}$. For each $k$, $\mathbb{K}(T_k^*)$ will be a Fra\"{i}ss\'{e} class with $(\mathbb{K}(T_k^*),\mathbb{K}(T_{k+1}^*))$ being reasonable. Let $S_k \supseteq T_k^*$ be the $\mc{L}_k$-theory of the Fra\"{i}ss\'{e} limit of $\mathbb{K}(T_k^*)$. This is a complete, consistent extension of $T_k^*$ which is $\omega$-categorical and admits quantifier elimination. Since the Fra\"{i}ss\'{e} limit of $\mathbb{K}(T_k^*)$ is a reduct to $\mc{L}_k$ of the Fra\"{i}ss\'{e} limit of $\mathbb{K}(T_{k+1}^*)$, we will have that $S_k = S_{k+1} \res_{\mc{L}_k}$.
		
		We will begin with $T_0^* = T_0$ and will ensure that $T_{k+1}^* \cup S_k \models T_{k+1}$. Since $S_{k + 1} \models T_{k+1}^*$, as it is the theory of the Fra\"{i}ss\'{e} limit of $T_{k+1}^*$, and because $S_{k + 1} \models S_k$ by reasonableness, we will have $S_{k + 1} \models T_{k+1}$. Letting $S = \bigcup_{k \in \omega} S_k$, and after verifying that $S \models \exists x \; U_\sigma(x)$ for every $\sigma \prec \pi$, we will get a complete $\mc{L}$-theory with $S \models T_\pi \cup \{  \exists x \; U_\sigma(x) : \sigma \prec \pi \}$ and hence $S \models T$.
		
		Beginning with $T_0^* = T_0$, this has no axioms as $\mc{L}_0$ has only the symbol $U_\varnothing$. This is of course a Fra\"{i}ss\'{e} class and has as its Fra\"{i}ss\'{e} limit infinitely many elements satisfying, and infinitely many elements not satisfying, $U_\varnothing$. Now given the $\mathcal{L}_{k-1}$-theory $S_{k-1}$ we will define $T_{k}^*$. Since $S_{k-1}$ admits quantifier elimination, there is a quantifier-free $\mc{L}_{k-1}$-formula $Q_{k-1}(\bar{y})$ such that $S_{k-1} \models P_{k-1}(\bar{y}) \longleftrightarrow Q_{k-1}(\bar{y})$. We will have already defined, for each $i < k-1$, an $\mc{L}_i$-formula $Q_i(\bar{y})$ such that $S_i \models P_i(\bar{y}) \longleftrightarrow Q_i(\bar{y})$. Since $S_{k-1} \models S_i$, this equivalence is also true modulo $S_{k-1}$. We are now ready to define $T_{k}^*$, which has the following axioms:
		\begin{enumerate}
			\item[($1^*$)] for each $\sigma \nprec \pi$, $\sigma \in \{0,1,2,\ldots,k-1\}^{\leq k}$,
			\[ \neg \exists x \; U_\sigma(x) \]
			\item[($2^*$)] for each $\sigma \prec \pi$, $\sigma \in \{0,1,2,\ldots,k-1\}^{\leq k}$, with $\ell$ such that $\sigma \ell \prec \pi$, and $i < \min(\ell,k)$
			\[U_\sigma(x) \longrightarrow \forall \bary \;[Q_i(\bary) \longleftrightarrow V_{i, \sigma}(\bary, x)].\]
		\end{enumerate}
		($1^*$) is exactly the same as ($1'$), and ($2^*$) is the same as ($2'$) except with $Q_i(\bary)$ replacing $P_i(\bary)$. So $T_{k}^*$ includes, for each axiom of $T_{k}$, an axiom which is equivalent modulo $S_{k-1}$. Thus $T_{k}^* \cup S_{k-1} \models T_{k}$ and $T_{k} \cup S_{k-1} \models T_{k}^*$. We also have that $T_{k}^* \models T_{k-1}^*$ as each axiom of $T_{k-1}^*$ is an axiom of $T_{k}^*$.
		
		The point of defining $T_{k}^*$ is that it is a universal theory and hence $\mathbb{K}(T_{k}^*)$ has the HP. The next two lemmas complete the verification that $\mathbb{K}(T_{k}^*)$ is a Fra\"{i}ss\'{e} class.
		
		\begin{claim}
			$\mathbb{K}(T_{k}^*)$ has the JEP.
		\end{claim}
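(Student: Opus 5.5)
The plan is to prove the joint embedding property by a free amalgamation over the empty set. Given $\mc{A},\mc{B} \in \mathbb{K}(T_k^*)$, let $\mc{C}$ have domain the disjoint union $\mc{A} \sqcup \mc{B}$. We will define the relations of $\mc{C}$ so that on every tuple lying entirely inside $\mc{A}$ (resp.\ entirely inside $\mc{B}$) each $\mc{L}_k$-symbol agrees with $\mc{A}$ (resp.\ $\mc{B}$). This makes the inclusions $\mc{A} \hookrightarrow \mc{C}$ and $\mc{B} \hookrightarrow \mc{C}$ embeddings. The remaining work is to choose the relations on mixed tuples so that $\mc{C} \models T_k^*$.

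The unary relations $U_\sigma$ and $A_i$ only involve single elements, so they are fully determined by $\mc{A}$ and $\mc{B}$. In particular no element of $\mc{C}$ satisfies $U_\sigma$ for $\sigma \nprec \pi$, so the axioms ($1^*$) hold in $\mc{C}$.

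For the symbols $V_{i,\sigma}$ with $i<k$ we proceed by recursion on $i$. The key observation is that $Q_i(\bar{y})$ is a quantifier-free $\mc{L}_i$-formula, and $\mc{L}_i$ contains only the symbols $V_{j,\tau}$ with $j<i$ (together with unary symbols). Suppose the $V_{j,\tau}$ for $j<i$ have already been defined on $\mc{C}$. Then the truth value of $Q_i(\bar{y})$ in $\mc{C}$ is determined for every tuple $\bar{y}$. We now set, for every tuple $(\bar{y},x)$ that is not contained entirely in $\mc{A}$ or entirely in $\mc{B}$,
\[ \mc{C} \models V_{i,\sigma}(\bar{y},x) \iff \mc{C} \models Q_i(\bar{y}), \]
and copy $\mc{A}$ or $\mc{B}$ on the non-mixed tuples. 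This recursion is well-founded and $\mc{C}$ is finite, so $\mc{C}$ is a finite $\mc{L}_k$-structure containing $\mc{A}$ and $\mc{B}$ as substructures.

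It remains to verify the axioms ($2^*$). Fix $\sigma \prec \pi$ and $i$ in the relevant range, an element $x$ with $U_\sigma(x)$, and a tuple $\bar{y}$; say $x \in \mc{A}$.
\begin{itemize}
\item If $(\bar{y},x)$ is mixed, the equivalence $Q_i(\bar{y}) \leftrightarrow V_{i,\sigma}(\bar{y},x)$ holds by definition.
\item If $\bar{y} \subseteq \mc{A}$, then $V_{i,\sigma}(\bar{y},x)$ has its value from $\mc{A}$. Every atomic subformula of the quantifier-free formula $Q_i(\bar{y})$ is evaluated on a subtuple of $\bar{y}$, hence inside $\mc{A}$, where $\mc{C}$ agrees with $\mc{A}$. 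So $\mc{C} \models Q_i(\bar{y})$ iff $\mc{A} \models Q_i(\bar{y})$, and the axiom holds because $\mc{A} \models T_k^*$.
\end{itemize}
The case $x \in \mc{B}$ is symmetric. Hence $\mc{C} \in \mathbb{K}(T_k^*)$, which gives the JEP.

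The only delicate point is the circularity: the definition of the $V_{i,\sigma}$ on mixed tuples refers to $Q_i$, which itself mentions other $V$'s. This is resolved exactly because $Q_i$ lies in $\mc{L}_i$ and so involves only $V_{j,\tau}$ with $j<i$. That is the reason $P_i$ was replaced by the quantifier-free $Q_i$ in defining $T_k^*$: it makes the recursion well-founded and makes the value of $Q_i$ on a tuple local to that tuple. I expect the same construction, amalgamating over a common substructure instead of over $\varnothing$, to give the AP as well.
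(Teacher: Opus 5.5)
Your proposal is correct and follows essentially the same route as the paper. Both take the free join on the disjoint union, copy relations on non-mixed tuples, and define $V_{i,\sigma}$ on mixed tuples by recursion on $i$ so that it agrees with $Q_i(\bar{y})$. Both then verify $(1^*)$ and $(2^*)$ by splitting into mixed and non-mixed tuples. Your remark that a quantifier-free $Q_i(\bar{y})$ on a tuple inside $\mathcal{A}$ is evaluated inside $\mathcal{A}$ just makes explicit a step the paper leaves implicit.
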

		\begin{proof}
			Let $\cA, \cB \models T_k^*$. We define a sort of free amalgam $\cC$ of $\cA$ and $\cB$. The domain of $\cC$ is the disjoint union $C = A \sqcup B$ of the domains of $\mc{A}$ and $\mc{B}$.
			For any tuple contained entirely in $\mc{A}$ or $\mc{B}$, let $\cC$ inherit the relations from $\cA$ and $\cB$. Since the $U_\sigma$ and $A_i$ are unary relations, this defines them completely on $\cC$. It remains to define the relations $V_{i, \sigma}$ for tuples $(\bary, x)$ that are not fully contained in $\cA$ or $\cB$. On these tuples we would like to set these relations so that $\cC \models V_{i, \sigma}(\bary,x) \longleftrightarrow Q_i(\bary)$, but we must do this recursively so that when we are defining $V_{i, \sigma}$ we have already determined $Q_i(\bary)$. First, for $V_{0, \sigma} \in \cL_k$ and $(\bary, x)$ not fully contained in $\cA$ or $\cB$, set $\cC \models V_{0, \sigma}(\bary,x)$ if and only if $\cC \models Q_0(\bary)$. $Q_0$ is an $\mc{L}_0$-formula, and the reduct of $\cC$ to $\mc{L}_0 = \{U_\varnothing\}$ is already defined. Once all $V_{i, \sigma} \in \mc{L}_k$ for $i < n$ have been determined, we can set $\cC \models V_{n, \sigma}(\bary,x)$ if and only if $\cC \models Q_n(\bary)$ which is well-defined as $Q_n$ is an $\mc{L}_n$-formula and the reduct of $\cC$ to $\mc{L}_n$ has already been defined. We can see that for all $(\bary, x)$ not fully included in $\cA$ or $\cB$ we have $\cC \models Q_i(\bary) \longleftrightarrow V_{i, \sigma}(\bary, x)$.
			
			We must now check that $\cC \models T_k^*$. The axioms ($1^*$) are immediate, since the $U_\sigma$ are unary and each element of $\cC$ is in either $\cA$ or $\cB$ both of which satisfy ($1^*$). For ($2^*$), suppose that $\cC \models U_\sigma(x)$ and $x \in \mc{A}$. For $\bary \in \cA$, since $\cA \models T_k^*$ we have $\cC \models Q_i(\bary) \longleftrightarrow V_{i, \sigma}(\bary, x)$, and for tuples $\bary$ not completely 
			contained in $\mc{A}$ by construction we have $\cC \models Q_i(\bary) \longleftrightarrow V_{i, \sigma}(\bary, x)$. The case where $x \in \mc{B}$ is identical. So we have verified that $\cC \models T_k^*$.
		\end{proof}
		
		\begin{claim}
			$\mathbb{K}(T_{k}^*)$ has the AP.
		\end{claim}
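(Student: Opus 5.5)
The plan is to mimic the free-amalgam construction from the proof of the JEP claim. Let $\cA, \cB_1, \cB_2 \in \mathbb{K}(T_k^*)$ with embeddings $\cA \to \cB_1$ and $\cA \to \cB_2$. We may assume $\cA \subseteq \cB_1$, $\cA \subseteq \cB_2$ and $B_1 \cap B_2 = A$. Let $\cC$ have domain $C = B_1 \cup B_2$.

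First, on every tuple contained entirely in $B_1$ or entirely in $B_2$, $\cC$ inherits the relations of $\cB_1$ or $\cB_2$ respectively. This is consistent because both restrict to $\cA$ on tuples from $A$. Since the $U_\sigma$ and $A_i$ are unary and every element lies in $B_1$ or $B_2$, this determines them completely on $\cC$.

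Second, for the remaining tuples $(\bar{y},x)$, those not contained in $B_1$ nor in $B_2$, we define $V_{i,\sigma}$ recursively on $i$ exactly as in the JEP claim: set $\cC \models V_{i,\sigma}(\bar{y},x)$ iff $\cC \models Q_i(\bar{y})$. This is well-defined since $Q_i$ is an $\mc{L}_i$-formula, and the $\mc{L}_i$-reduct of $\cC$ has already been determined. Here we use that $\mc{L}_i$ mentions only the $V_{j,\tau}$ with $j<i$ together with unary symbols. We should also check that $\mc{L}_i \subseteq \mc{L}_k$ for $i<k$, which is clear from the definitions. The maps $\cB_1 \to \cC$ and $\cB_2 \to \cC$ are then embeddings by construction.

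Third, we verify $\cC \models T_k^*$. Axioms ($1^*$) hold since $U_\sigma$ is unary and each element comes from a model of ($1^*$). For ($2^*$), fix $x$ with $\cC \models U_\sigma(x)$, say $x \in B_1$, and a tuple $\bar{y}$.
\begin{itemize}
\item If $(\bar{y},x) \subseteq B_1$, we need $\cC \models Q_i(\bar{y})$ iff $\cB_1 \models Q_i(\bar{y})$, which holds because $Q_i$ is quantifier-free and $\cB_1$ is a substructure of $\cC$. Then $\cB_1 \models T_k^*$ gives the equivalence.
\item If $(\bar{y},x) \subseteq B_2$ (so $x \in A$), the same argument with $\cB_2$ applies.
\item Otherwise the equivalence holds by construction.
\end{itemize}

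The main obstacle is the substructure point in the first two cases. Since $Q_i$ is quantifier-free and every relation on tuples inside $B_1$ was copied from $\cB_1$, this is immediate. The point to emphasize is that the recursive definition never changes relations on tuples inside a single $B_j$, so no conflict arises.

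Finally, note the bookkeeping required by the axiom condition $i<\min(\ell,k)$. Only those $V_{i,\sigma}$ appearing in ($2^*$) are constrained, but defining all $V_{i,\sigma} \in \mc{L}_k$ by the same rule is harmless.
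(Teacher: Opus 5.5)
Your proposal is correct and follows the paper's proof essentially verbatim. You build the free amalgam on $B_1 \cup B_2$, inheriting relations from each side, define $V_{i,\sigma}$ on mixed tuples recursively in $i$ so that it agrees with $Q_i$, and check the axioms as in the JEP claim; your three-way case split for ($2^*$) (tuple inside $B_1$, inside $B_2$ with $x \in A$, or mixed) simply spells out what the paper leaves as ``the same as for the JEP.''
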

		\begin{proof}
			Given $\cA,\cB,\cC \models T_{k}^*$ with $\cA \subseteq \cB$ and $\cA \subseteq \cC$, we must form the amalgam $\cD$ of $\cB$ and $\cC$ over $\cA$. This will be a free amalgam of $\cB$ and $\cC$ with domain $D = B \cup C$. It is clear this free amalgam makes the diagram commute, so all that is left to do is assign relations.
			
			As for the JEP, let $\cD$ inherit the relations from $\cB, \cC$ for tuples solely contained in $\cB, \cC$. For tuples entirely contained in their intersection $\cA$, we know that $\cB, \cC$ agree on the relations. It only remains to assign tuples that are not entirely contained in either $\cB$ or $\cC$. As before, we inductively assign $V_{i, \sigma}$ on such tuples $(\bary, x)$ so that $\cD \models Q_i(\bary) \longleftrightarrow V_{i, \sigma}(\bary, x)$. The argument that $\cD \models T_k^*$ is again the same as for the JEP, with ($1^*$) being relatively immediate and ($2^*$) following from the fact that for new tuples $(\bary, x)$ we have $\cD \models Q_i(\bary) \longleftrightarrow V_{i, \sigma}(\bary, x)$.
		\end{proof}
		
		Since $T_{k}^* \models T_{k-1}^*$, for any $\mc{A} \in \mathbb{K}(T_{k}^*)$ the reduct of $\mc{A}$ to $\mc{L}_{k-1}$ is in $\mathbb{K}(T_{k-1}^*)$. So $ \mathbb{K}(T_{k}^*)$ is an expansion class of $\mathbb{K}(T_{k-1}^*)$. We now verify that it is reasonable.
		
		\begin{claim}\label{claim:three}
			$(\mathbb{K}(T_{k-1}^*),\mathbb{K}(T_{k}^*))$ is reasonable.
		\end{claim}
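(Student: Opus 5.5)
Fix $\mc{A}_{k} \in \mathbb{K}(T_k^*)$ with reduct $\mc{A}_{k-1} = \mc{A}_k \res_{\mc{L}_{k-1}}$, and an embedding $f \colon \mc{A}_{k-1} \to \mc{B}_{k-1}$ with $\mc{B}_{k-1} \in \mathbb{K}(T_{k-1}^*)$. Identify $\mc{A}_{k-1}$ with its image, so $\mc{A}_{k-1} \subseteq \mc{B}_{k-1}$. The goal is an $\mc{L}_k$-expansion $\mc{B}_k$ of $\mc{B}_{k-1}$ that satisfies $T_k^*$ and restricts to $\mc{A}_k$ on $A$. The strategy is the same recursive free-assignment used for the JEP and AP, but applied to a single structure.

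The new symbols in $\mc{L}_k - \mc{L}_{k-1}$ are of three kinds. There are the unary $U_\sigma$ with $\sigma \in \{0,\ldots,k-1\}^{\leq k}$ not in the old index set, the unary $A_{k-1}$, and the $V_{i,\sigma}$ with either $i = k-1$ or $\sigma$ newly indexed.

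\begin{itemize}
\item On $A$, every new symbol is copied from $\mc{A}_k$.
\item On elements of $B - A$, each new unary $U_\sigma$ is set false and $A_{k-1}$ is set arbitrarily, say true. Setting $U_\sigma$ false makes the axioms ($1^*$) hold: on $A$ they hold because $\mc{A}_k \models T_k^*$, and off $A$ no new $U_\sigma$ holds. Old $U_\sigma$ are unchanged and already satisfy the old ($1^*$).
\item For tuples $(\bar y, x)$ not contained in $A$, each new $V_{i,\sigma}$ is defined by $V_{i,\sigma}(\bar y,x) :\Leftrightarrow Q_i(\bar y)$, proceeding by induction on $i$. This is well defined because $Q_i$ is an $\mc{L}_i$-formula. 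Since $\mc{L}_i \subseteq \mc{L}_{k-1}$ for $i \leq k-1$, the reduct of $\mc{B}_k$ to $\mc{L}_i$ is already fixed (it is part of $\mc{B}_{k-1}$), so we do not even need the recursion across $i$.
\end{itemize}

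Next we check the axioms ($2^*$) for $T_k^*$. Take $\sigma \prec \pi$, $i < \min(\ell,k)$, and $x$ with $U_\sigma(x)$, and split on the tuple $(\bar y, x)$.

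\begin{itemize}
\item If $V_{i,\sigma}$ is an old symbol and the axiom already belongs to $T_{k-1}^*$, it holds in $\mc{B}_{k-1}$.
\item If $V_{i,\sigma}$ is new and $(\bar y, x) \subseteq A$, the axiom holds because $\mc{A}_k \models T_k^*$. This uses that $Q_i$ is quantifier-free, so its truth value on $A$ is the same in $\mc{A}_k$ and in $\mc{B}_k$.
\item If $V_{i,\sigma}$ is new and the tuple leaves $A$, the axiom holds by construction.
\end{itemize}

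The main obstacle is an axiom of ($2^*$) at level $k$ that uses an \emph{old} symbol $V_{i,\sigma}$ but was not an axiom at level $k-1$. This happens because the bound $i < \min(\ell,k)$ grows with $k$, or because $\sigma$ is only now admitted. Since the old $V_{i,\sigma}$ are already fixed in $\mc{B}_{k-1}$, we cannot redefine them. I would first check whether this case is vacuous.

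\begin{itemize}
\item If $i < k-1$ and $V_{i,\sigma} \in \mc{L}_{k-1}$, then $\sigma \in \{0,\ldots,k-2\}^{\leq k-1}$. The only new constraint is $i < \ell$ with $\ell \geq k-1$, so $i < \min(\ell, k-1)$ and the axiom is already in $T_{k-1}^*$.
\item If $i = k-1$, the symbol $V_{k-1,\sigma}$ is new.
\end{itemize}

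So in fact every instance of ($2^*$) at level $k$ involving an old symbol is already in $T_{k-1}^*$, and all new instances involve new symbols, which we control off $A$. I expect this index bookkeeping to be the delicate point, together with checking that $U_\sigma$ for a newly admitted $\sigma$ is a new symbol, so that $U_\sigma$ is false off $A$.

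Finally, $f$ is an embedding of $\mc{A}_k$ into $\mc{B}_k$ because the new relations on $A$ were copied from $\mc{A}_k$. Hence $\mc{B}_k \in \mathbb{K}(T_k^*)$ is the required expansion, and the pair $(\mathbb{K}(T_{k-1}^*),\mathbb{K}(T_k^*))$ is reasonable.
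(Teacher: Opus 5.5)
Your proposal is correct and follows essentially the same route as the paper: copy the $\mathcal{L}_k$-structure on $A$, make the new $U_\sigma$ false off $A$, set each new $V_{i,\sigma}(\bar y,x)$ to agree with the already-determined $Q_i(\bar y)$ on tuples leaving $A$, and observe that every instance of $(2^*)$ involving an old $V_{i,\sigma}$ (which forces $i<k-1$ and $\sigma\in\{0,\ldots,k-2\}^{\leq k-1}$) is already an axiom of $T_{k-1}^*$. The only cosmetic differences are harmless. Setting $A_{k-1}$ true rather than false off $A$ is fine, since $A_{k-1}$ occurs in no axiom of $T_k^*$. Your explicit index bookkeeping is precisely what justifies the paper's remark that every new axiom involves a new symbol $V_{i,\sigma}$; note that $i<k-1$ and $i<\ell$ already give $i<\min(\ell,k-1)$, so your condition $\ell\geq k-1$ is not needed there.
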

		\begin{proof}
			Let $\cA, \cB \models T_{k-1}^*$ be such that $\cA \subseteq \cB$. 
			Let $\cA^* \models T_{k}^*$ be an expansion of $\cA$ to an $\mc{L}_{k}$-structure. We must construct $\mathcal{B}^* \models T_{k}^*$ an expansion of $\mc{B}$ such that $\mathcal{A}^* \subseteq \mathcal{B}^*$. Note that
			\begin{align*}\cL_{k} - \cL_{k-1} = \set{U_\sigma&: \sigma \in \treeT  \cap \set{0, 1, \ldots, k-1}^{\leq k} \setminus \set{0, 1, \ldots, k - 2}^{\leq k-1 }} \cup \set{A_{k-1}} \\
				&\cup \set{V_{i, \sigma}: i < k-1, \sigma \in \treeT \cap \set{0, 1, \ldots, k-1}^{\leq k} \setminus \set{0, 1, \ldots, k - 2}^{\leq k-1}} \\
				& \cup  \set{V_{k-1, \sigma}: \sigma \in \treeT \cap \set{0, 1, \ldots, k-1}^{\leq k}}.\end{align*}
			To define the relations on $\mc{B}^*$, let all relations from $\mc{A}^*$ carry over. For tuples not fully contained in $\mc{A}^*$, let $A_{k-1}$
			be false and all new $U_\sigma$ be false. It remains to set the $V_{i, \sigma}$ that are introduced in $\mc{L}_{k}$ for tuples that are not exclusively in $\mc{A}^*$. Since the quantifier-free formulas $Q_i$, $i < k$, only use symbols from $\cL_{i} \subseteq \cL_{k-1}$, it is already determined whether $\cB \models Q_i(\bary)$. For tuples $(\bary,x)$ not entirely contained in $\cA^*$ we can simply set $\cB^* \models V_{i, \sigma}(\bary,x)$ if and only if $\cB \models Q_i(\bary)$.
			
			It remains to verify that $\cB^* \models T_{k}^*$. We already know that $\cB \models T_{k-1}^*$, so we just need to check the new axioms. New axioms of the form ($1^*$) are all of the form $\neg \exists x \; U_\sigma(x)$ with $U_\sigma \in \mc{L}_{k} - \mc{L}_{k-1}$. For $x \in \cA^*$ we have $\neg U_\sigma(x)$ as $\cA^* \models T_{k}^*$, and for $x \notin \cA^*$ recall that we set $\neg U_\sigma(x)$.
			
			Now we consider axioms of the form ($2^*$). Such an axiom is of the form
			\[ U_\sigma(x) \longrightarrow \forall \bary \;[ \; Q_i(\bary) \longleftrightarrow V_{i, \sigma}(\bary, x) \; ]\]
			where $\sigma \prec \pi$, $\ell$ is such that $\sigma \ell \prec \pi$, and $i < \min(\ell,k)$. 
			It is equivalent to check that for each $x \in \cB^*$ and $i < \min(\ell,k)$ that
			\[\cB^* \models U_\sigma(x) \longrightarrow \forall \bary \; [ \; Q_i(\bary) \longleftrightarrow V_{i, \sigma}(\bary, x) \;]\]
			For such an axiom to be new, one of $U_\sigma$ or $V_{i, \sigma}$ are in $\cL_{k} - \cL_{k-1}$.
			If $U_\sigma \in \cL_{k} - \cL_{k-1}$, then $\sigma \in \treeT \cap \set{0, 1, \ldots, k-1}^{\leq k} - \set{0, 1, \ldots, k-2}^{\leq k-1}$, 
			so $V_{i, \sigma} \notin \cL_{k-1}$. Thus for any such new axiom we have $V_{i, \sigma} \in \cL_{k} - \cL_{k-1}$ is a new symbol. Suppose $x \in \cA^*$ and $U_\sigma(x)$. Then for $\bary$ entirely contained in $\cA^*$ we have
			$\cA^* \models Q_i(\bary) \longleftrightarrow V_{i, \sigma}(\bary, x)$ since $\cA^* \models T_{k}^*$.
			For $\bary$ not completely contained in $\mc{A}^*$, by construction we have that $\cB^* \models Q_i(\bary) \longleftrightarrow V_{i, \sigma}(\bary, x)$. Thus $\cB^* \models \forall \bary \; [Q_i(\bary) \longleftrightarrow V_{i, \sigma}(\bary, x)]$. If $x \notin \cA^*$, then whether or not we have $\cB^* \models U_\sigma(x)$, by construction $\cB^* \models \forall \bary \; [Q_i(\bary) \longleftrightarrow V_{i, \sigma}(\bary, x) ]$. Thus $\cB^* \models T_{k}^*$, so $(\mathbb{K}(T_{k-1}^*),\mathbb{K}(T_{k}^*))$ is reasonable.
		\end{proof}
		
		Let $S_{k}$ be the complete theory of the Fra\"{i}ss\'{e} limit of $\mathbb{K}(T_{k}^*)$, which is $\omega$-categorical and admits quantifier elimination. Since $(\mathbb{K}(T_{k-1}^*),\mathbb{K}(T_{k}^*))$ is reasonable, $S_{k} \res_{\mc{L}_{k-1}} = S_{k-1}$. Let $S = \bigcup_k S_k$. This is a complete $\mc{L}$-theory. Since each $S_k$ admits quantifier elimination, so does $S$.
		
		\begin{claim}
			For each $\sigma \prec \pi$, $S \models \exists x \; U_\sigma(x)$.
		\end{claim}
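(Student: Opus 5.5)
Fix $\sigma \prec \pi$ and choose $k$ large enough that $U_\sigma \in \mc{L}_k$, i.e.\ $\sigma \in \{0,1,\ldots,k-1\}^{\leq k}$. Since $S \res_{\mc{L}_k} = S_k$ and $S_k$ is the theory of $\flim(\mathbb{K}(T_k^*))$, it suffices to show that some finite structure in $\mathbb{K}(T_k^*)$ has an element satisfying $U_\sigma$. Every element of the age appears in the Fra\"{i}ss\'{e} limit, so the limit then satisfies $\exists x\, U_\sigma(x)$. This reduces the claim to exhibiting one finite model of the universal theory $T_k^*$ in which $U_\sigma$ holds of some element.

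I would take $\mc{A}$ to be a one-element structure $\{a\}$. On it, set $U_\sigma(a)$ true, all other $U_\tau$ false (in particular for every $\tau \nprec \pi$, which is consistent with $\sigma \prec \pi$), and all $A_i$ true (or false; they play no role). Then define the $V_{i,\tau}$ for $i<k$ recursively in $i$, exactly as in the proof that $T$ is satisfiable and in the JEP construction. Set $\mc{A} \models V_{i,\tau}(\bar{y},a)$ if and only if $\mc{A} \models Q_i(\bar{y})$. This is well-defined because $Q_i$ is a quantifier-free $\mc{L}_i$-formula, and the reduct of $\mc{A}$ to $\mc{L}_i$ has already been determined when $V_{i,\tau}$ is being defined. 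Here $\bar{y}$ ranges over the finitely many tuples from $\{a\}$.

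Next I would check the axioms of $T_k^*$. Axioms ($1^*$) hold because only $U_\sigma$ with $\sigma\prec\pi$ is ever true. For ($2^*$), the only element is $a$, and for every $\tau$ and every $i<k$ we arranged $\forall\bar{y}\,[Q_i(\bar{y})\leftrightarrow V_{i,\tau}(\bar{y},a)]$. This is stronger than what the axiom requires, so all axioms hold. Hence $\mc{A}\in\mathbb{K}(T_k^*)$, so $\mc{A}$ embeds in $\flim(\mathbb{K}(T_k^*))$, giving $S_k\models\exists x\,U_\sigma(x)$ and therefore $S\models\exists x\,U_\sigma(x)$.

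The only delicate point is the recursive definition of the $V_{i,\tau}$. One must confirm that $Q_i$ never mentions $V_{j,\tau}$ for $j\geq i$, which holds because $Q_i$ is an $\mc{L}_i$-formula by construction. With that ordering in place, the construction is identical to the JEP argument, so I expect no real obstacle.
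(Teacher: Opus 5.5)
Your proposal is correct and follows essentially the same route as the paper: you reduce to $S_k$ for $k$ with $U_\sigma \in \mathcal{L}_k$, build a one-element model of $T_k^*$ with $U_\sigma$ true and the $V_{i,\tau}$ defined recursively to agree with $Q_i$, and use that this model lies in the age of $\flim(\mathbb{K}(T_k^*))$. You also correctly verify membership in $T_k^*$ (the paper's last line writes $T_k$, evidently meaning $T_k^*$), and you rightly note that the values of the $A_i$ are irrelevant, since the $V_{i,\tau}$ are defined to match $Q_i$ whatever they are.
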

		\begin{proof}
			Fix $\sigma \prec \pi$. It suffices to show that for some $k$, $S_k \models \exists x \; U_\sigma(x)$. Since $S_k$ is the Fra\"{i}ss\'{e} limit of $T_k^*$, it suffices to show that there is a model of $\mc{A} \models T_k^*$ with $\mc{A} \models \exists x \; U_\sigma(x)$. Let $k$ be large enough that $U_\sigma \in \mc{L}_k$. We can take $\mc{A}$ to have a single element $a$ satisfying $U_\sigma$ and no other $U_\tau \in \mc{L}_k$. This element $a$ will also satisfy no $A_i \in \mc{L}_k$, and we can inductively define the $V_{i,\tau} \in \mc{L}_k$ such that $\mc{A} \models Q_i(a,\ldots,a) \longleftrightarrow V_{i,\tau}(a,\ldots,a,a)$. Thus, checking axioms of types ($1^*$) and ($2^*$), we see that $\mc{A} \models T_k$.
		\end{proof}
		
		For each $k$, $S \models T_{k}^* \cup S_{k - 1} \models T_{k}$ and so $S \models T_\pi$. Hence $S$ is a completion of $T_\pi \cup \{\exists x \;U_\sigma(x) : \sigma \prec \pi\}$ and hence, as discussed earlier, $S$ is a completion of $T$.
		
		Now let $S^+$ be the Marker extension of $S$ where we make each $A_i$ and $U_\sigma$ universal. Since $S$ is consistent, so is $S^+$ (Lemma \ref{lem:consistent}). Also $S$ has property ($*$) as each $\sigma$ of length $1$ is in $\treeT$ (as $\varnothing$ has a child in $\treeT$ and hence all of its possible children are in $\treeT$) but $S \models \forall x \; \neg U_{\sigma}(x)$ for all $\sigma \nprec \pi$. Recall that it is the $\neg U_\sigma$ that are made existential in the Marker extension. $S^+$ is complete and by Lemma \ref{lem:marker-subtheory} it is a completion of $T^+$. We will now argue that $S^+$ is not relatively decidable.
		
		\begin{claim}
			$S^+$ is not relatively decidable.
		\end{claim}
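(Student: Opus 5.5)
By Theorem \ref{thm:main}, and since $S^+$ is complete (Lemma \ref{lem:complete}, using property ($*$) as verified above), it suffices to show there is no $\mc{L}^+$-formula $\varphi(\bar{x})$ with $S^+ \models \exists \bar{x}\,\varphi(\bar{x})$ such that $S^+ \cup \{\varphi(\bar{c})\}$ is model complete. Suppose for contradiction that such a $\varphi$ exists. My plan is to exhibit two models $\mc{M} \subseteq \mc{N}$ of $S^+ \cup \{\varphi(\bar{c})\}$, with the same interpretation of $\bar{c}$, and a first-sort element $a \in \mc{M}$ that satisfies $U_\sigma$ in $\mc{M}$ but not in $\mc{N}$. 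Since $U_\sigma$ has been made universal, its negation is existential, and this is compatible with an honest $\mc{L}^+$-embedding. It contradicts model completeness, because every substructure between models of a model complete theory is elementary.

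\textbf{Simplifying $\varphi(\bar{c})$.} By Lemma \ref{lem:qe}, $\varphi(\bar{x})$ is equivalent modulo $S^+$ to a quantifier-free $\mc{L}^*$-formula. Each $\psi^*$ occurring in it is the translation of an $\mc{L}$-formula $\psi$. Since $S$ has quantifier elimination (as the union of the $S_k$), each such $\psi$ is equivalent modulo $S$ to a quantifier-free $\mc{L}$-formula. Hence, modulo $S^+$, $\varphi(\bar{c})$ is a quantifier-free condition $\varphi_0$. It involves finitely many symbols, the tuple $\bar{c}$, the images of $\bar{c}$ under the unary terms, and (through the $\tilde{\alpha}$'s applied to these terms) the existence of witnesses to finitely many facts $\neg U_\tau(\bar{e})$ and $\neg A_i(\bar{e})$ for first-sort elements $\bar{e}$ generated by $\bar{c}$. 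Let $E$ be the finite set of first-sort elements generated by $\bar{c}$.

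\textbf{Construction.} Fix $\sigma \prec \pi$. Let $\mc{A}$ be a countable model of $S$ and $(\mc{A}^+_n,\bar{c}) \models S^+ \cup \{\varphi(\bar{c})\}$. Since $S \models \exists x\, U_\sigma(x)$ and, by the extension property of the Fra\"{i}ss\'{e} limits, there are infinitely many such $x$, we may choose $a \in \mc{A}$ with $U_\sigma(a)$ and $a \notin E$. Define $\mc{A}'$ to be $\mc{A}$ with $U_\sigma(a)$ switched to false and everything else unchanged. Then $\mc{A} \subseteq_w \mc{A}'$: the relations $\neg U_\tau$ and $\neg A_i$ (the ones made existential) only gain instances, and all other relations, including the $V_{i,\tau}$, are unchanged. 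Every finite substructure of $\mc{A}'$ still satisfies each universal theory $T_k^*$, because the axioms ($2^*$) only constrain elements lying in some $U_\tau$, so dropping $U_\sigma$ from one element only removes constraints. The axioms ($1^*$) are also unaffected.

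\textbf{The main obstacle.} The key step is to embed $\mc{A}'$ into a model $\mc{B} \models S$ by an honest $\mc{L}$-embedding, so that $\mc{A}' \subseteq \mc{B}$ and hence $\mc{A} \subseteq_w \mc{B}$. I would build $\mc{B}$ as a countable union of finite structures in $\bigcup_k \mathbb{K}(T_k^*)$ extending $\mc{A}'$. At each stage I would use the AP of $\mathbb{K}(T_k^*)$ and reasonableness of $(\mathbb{K}(T_{k-1}^*),\mathbb{K}(T_k^*))$ to meet every extension requirement of every level $k$ in a dovetailed fashion. The limit then has, for each $k$, an $\mc{L}_k$-reduct that is the Fra\"{i}ss\'{e} limit of $\mathbb{K}(T_k^*)$, so $\mc{B} \models S$.

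The delicate point is that amalgamating over the infinite structure $\mc{A}'$ must be done one finite piece at a time. I would realize each one-point extension type over a finite $\mc{F} \subseteq \mc{A}'$ by a new element. New tuples involving new elements get their $V$-relations set by the recursive $Q_i$-rule, exactly as in the free amalgams of the JEP and AP claims. This keeps $\mc{A}'$ a substructure throughout.

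\textbf{Conclusion.} By Lemma \ref{lem:weak-substructure}, $\mc{A}^+ \subseteq \mc{B}^+$. Padding $\mc{B}^+$ with extra non-witness second-sort elements if $n>0$, we get $\mc{A}^+_n \subseteq \mc{B}^+_m$ for suitable $m$, and $\mc{B}^+_m \models S^+$ by property ($*$). The image of $\bar{c}$ still satisfies $\varphi_0$ in $\mc{B}^+_m$, since $a \notin E$ and the only change is a new witness to $\neg U_\sigma(a)$. So both structures are models of $S^+ \cup \{\varphi(\bar{c})\}$. But $U_\sigma(a)$ holds in $\mc{A}^+_n$ and fails in $\mc{B}^+_m$, so the inclusion is not elementary. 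This contradicts model completeness, and hence $S^+$ is not relatively decidable.
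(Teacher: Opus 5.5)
There is a genuine gap, at the one step that carries the difficulty of the claim. You assert that after flipping $U_\sigma(a)$, every finite substructure of $\mathcal{A}'$ still lies in each $\mathbb{K}(T_k^*)$, because ($2^*$) ``only constrain[s] elements lying in some $U_\tau$.'' That is false. The relation $U_\sigma$ also occurs inside the formulas $P_i$ (equivalently $Q_i$) on the right-hand side of the axioms attached to \emph{other} elements $x \in U_\tau$. Flipping $U_\sigma(a)$ changes $P_i(\bar y)$ for tuples $\bar y$ containing $a$. Meanwhile $V_{i,\tau}(\bar y,x)$ must stay fixed, since the $V$'s are not among the relations made existential, so a weak extension preserves them exactly.

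For instance, suppose $P_{i_0}(y)$ is $U_\varnothing(y)$ and the first entry of $\pi$ exceeds $i_0$. Then $T_\pi$, hence $S$, contains the universal sentence $\forall x\,\forall y\,[U_\varnothing(x) \to (U_\varnothing(y) \leftrightarrow V_{i_0,\varnothing}(y,x))]$. Take $\sigma = \varnothing$ and any $x \neq a$ in $U_\varnothing$. Then $V_{i_0,\varnothing}(a,x)$ holds in $\mathcal{A}$, hence in $\mathcal{A}'$, while $U_\varnothing(a)$ fails there. So $\mathcal{A}' \not\models S_\forall$, and no $\mathcal{B} \models S$ with $\mathcal{A}' \subseteq \mathcal{B}$ exists.

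Choosing $\sigma$ cleverly does not rescue a single flip in general. If every infinite path through $\treeT$ is strictly increasing, which the hypotheses on $\treeT$ allow, then for any $\sigma$ there is $\tau \prec \pi$ whose next entry on $\pi$ exceeds an index $i$ with $P_i = U_\sigma(y)$. Since $U_\tau$ has realizations other than $a$, the same contradiction follows. This is exactly how $T_\treeT$ is designed: elements of $U_\tau$ serve as parameters for quantifier-free definitions of the universal formulas $P_i$, $i<\ell$, and these include the universal relations themselves.

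What is missing is the paper's device of killing every constraint that could detect the change. With $\varphi$ an $\mathcal{L}_k^+$-formula, the paper sets $U_\tau$ false on \emph{every} element for all $U_\tau \notin \mathcal{L}_k$. It then flips $A_j(d)$, where $j$ exceeds the entries of each $\sigma \prec \pi$ with $U_\sigma \in \mathcal{L}_k$ and also the following entry $\ell$ of $\pi$. The Subclaim uses the two-case tree argument to show that, for the surviving $\sigma$ and $i<\ell$, the formulas $Q_i$ mention neither $A_j$ nor any killed $U_\rho$. Hence $\mathcal{B} \models S_\forall$.

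Once $\mathcal{B} \models S_\forall$ is known, embedding into a model of $S$ is a standard fact. So the dovetailed Fra\"{i}ss\'{e} construction you flag as the main obstacle is not where the work lies. Your reduction of $\varphi(\bar c)$, via quantifier elimination for $S$, to relations on the finite set $E$ is fine. But once the $U_\tau$'s are killed globally, you must, as the paper does, choose $k$ so that the quantifier-free form of $\varphi$ mentions only $\mathcal{L}_k$-symbols.
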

		\begin{proof}
			It suffices to show that for each formula $\varphi(\bar{x})$ such that $S^+ \models \exists \bar{x} \; \varphi(\bar{x})$, the $\mc{L}^+ \cup \{\bar{c}\}$-theory $S^+ \cup \{\varphi(\bar{c})\}$ is not model complete. Fix some such formula $\varphi(\bar{x})$. Let $k$ be such that $\varphi(\bar{x})$ is an $\mc{L}^+_k$-formula. By Lemma \ref{lem:qe}, $\varphi(\bar{x})$ is equivalent to a quantifier-free formula in the language $\mc{L}_k^+ \cup \{ \psi^* : \text{$\psi$ an $\mc{L}_k$-formula}\}$. Since $S_k$ admits quantifier elimination, $\varphi(\bar{x})$ is in fact equivalent to a quantifier-free formula in the language $\mc{L}_k^+ \cup \{ \psi^* : \text{$\psi$ a quantifier-free $\mc{L}_k$-formula}\}$. Choose $j > k$ sufficiently large so that for any $\sigma \prec \pi$ with $U_\sigma \in \cL_k$, $j$ is greater than any entry of $\sigma$ and also greater than the next entry of $\pi$ following all such $\sigma$.
			
			We will show that $S^+ \cup \{\varphi(\bar{c})\}$ is not model complete by showing that $A_j$ is not equivalent to an existential formula.  Given $\cA^+ \models S^+$ and $\barc \in \cA^+$ such that $\cA^+ \models \varphi(\barc)$ and with some $d \in \cA^+$ such that $\cA^+ \models A_j(d)$, we will construct a model $\cC^+ \supseteq \cA^+$ with $\cC^+ \models S^+ \cup \{\varphi(\bar{c})\}$ and  such that $\cC^+ \models \neg A_j(d)$.
			
			We will now argue that we may assume that $\bar{c}$ is entirely contained within the first sort of $\mc{A}^+$. If not, we may also split $\bar{c}$ into a pair of tuples $\bar{c}_1$ in the first sort $S_1$ and $\bar{c}_2$ in the second sort $S_2$, so that $\varphi$ can be written as $\varphi(\bar{x}_1,\bar{x}_2)$. Consider all of the atomic and negated atomic terms, in the language $\mc{L}_k^+ \cup \{ \psi^* : \text{$\psi$ a quantifier-free $\mc{L}_k$-formula}\}$, which appear in $\varphi(\bar{x}_1,\bar{x}_2)$. Since $\mc{A}^+ \subseteq \mc{C}^+$, the truth value of any atomic or negated atomic formula from $\mc{L}_k^+$ is maintained from $\mc{A}^+$ to $\mc{C}^+$. It is only the formulas $\psi^*$ which may change their truth value. Any such formula can only be true of elements in the first sort, and so if applied to a tuple including any element of $\bar{c}_2$ must be false in both $\mc{A}^+$ and $\mc{C}^+$. Thus, essentially, we only need to be concerned with $\bar{c}_1$, as $\bar{c}_2$ poses no obstacle.
			
			Let $\cA$ be the model of $S$ associated with $\cA^+$.  Let $\cB$ be the same as $\cA$ except that $\cB \models \neg U_\sigma(x)$ for all $x$ of the appropriate sort where $U_\sigma \notin \cL_k$, and also $\cB \models \neg A_j(d)$. Then $\cA$ is a weak substructure of $\cB$,  $\cA \subseteq_w \cB$. If $\cB^+$ is the Marker extension of $\cB$, then by Lemma \ref{lem:weak-substructure}, $\cA^+ \subseteq \cB^+$. Since $j > k$ the reducts of $\cA$ and $\cB$ to $\mc{L}_k$ are the same, and as $\varphi$ is an $\cL_k^+$-formula, we have $\cB^+ \models \varphi(\barc)$. Now we argue that $\cB$ satisfies the universal part of $S$.
			
			\begin{subclaim}
				$\cB \models S_\forall$.
			\end{subclaim}
			\begin{proof}
				Since $S = \bigcup_m S_m$, any universal consequence of $S$ is also a consequence of one of the $\mc{L}_m$-theories $S_m$. The theories $S_m$ are the Fra\"{i}ss\'{e} theories of the Fra\"{i}ss\'{e} classes $\mathbb{K}(T_{m}^*)$, and so it suffices to show that $\cB \models T_m^*$ for every $m$. 
				
				Using the fact that the reduct of $\cA$ to $\mc{L}_m$ is a model of $T_m^*$, we argue that $\cB$ is also a model of $T_m^*$. We satisfy the axioms of type $(1^*)$ trivially since none of the elements of $\cA$ satisfied $U_\sigma(x)$ for $\sigma \not\prec \pi$, and in $\cB$ we did not add any new elements or change any relations from $\neg U_\sigma$ to $U_\sigma$. Recalling the form of axioms of type $(2^*)$, for each $\sigma \prec \pi$ and $\ell$ such that $\sigma \ell \prec \pi$, and $i < \ell$, we will verify that
				\[\cB \models \forall x \; [U_\sigma(x) \longrightarrow \forall \bary[Q_i(\bary) \lra V_{i, \sigma}(\bary, x)]].\]
				Note that these are more axioms than appear in $T_m^*$, as we are only assuming that $i < \ell$ and not $i < \min(\ell,m)$, but as these axioms are all true anyway there is no reason to assume $i < m$. We will only consider those $U_\sigma \in \cL_k$, since all other $U_\sigma$ were set to false in $\mc{B}$. Such $\sigma$ are of length at most $k$ and satisfy $\sigma \in \set{0, 1, \ldots, k - 1}^{\leq k}$, so $\sigma \ell$ is at most length $k + 1$.
				We do not have a bound on $\ell$, though we do know that $j > \ell$.
				
				Fix $\sigma$ and $x \in \cB$ and assume $\cB \models U_\sigma(x)$. Consider some $Q_i(\bary)$ for $i < \ell$. It is a quantifier-free formula in $\cL_i$. Our goal is to make sure that $\cB \models Q_i(\bary) \lra V_{i, \sigma}(\bary, x)$ for each $i < \ell$. Since we did not change the truth value of $V_{i, \sigma}$, it suffices to show that we did not change the truth value of $Q_i(\bary)$. Since $Q_i$ is quantifier-free in $\cL_i$, it suffices to show that we did not change the truth value of any atomic formula that $Q_i$ is composed of. If $\ell \leq k$, then we know $Q_i$'s truth value is preserved since the reducts of $\cA$ and $\cB$ to $\cL_k$ are equal. Thus assume $\ell \geq k + 1$. First note that all $A_i$ for $i < \ell$ remain the same as in $\cA$ (we only changed $A_j$, with $j > \ell$) as do all $V_{i, \sigma}$ for any $i, \sigma$. Thus by ensuring that any $U_{\tau}$ in $Q_i$ has the same truth value as it does in $\cA$, we know that $Q_i$ will have the same truth value as $V_{i, \sigma}$. We only consider $\tau \prec \pi$ since all others are false in both $\mc{A}$ and $\mc{B}$. We claim that if $U_\tau$ appears in $Q_i$ for $i < \ell$ then $U_{\tau} \in \cL_k$. For the sake of contradiction, assume that $U_{\tau} \in \cL - \cL_k$. Thus $\tau \in \treeT \setminus \set{0, 1, \ldots, k - 1}^{\leq k}$. Either $\tau$ has length at least $k+1$ or there is $n \geq k$ such that $n$ appears as an entry in $\tau$. In the second case, note that $\sigma$ does not have $n$ as an entry and so $\sigma \prec \tau$.
				
				\begin{case}
					$\tau$ has length $\geq k+1$. Since $\tau \prec \pi$ and is of length $\geq k + 1$, we know $\tau$ must be $\sigma \ell$ or an extension of it. However, recall that $U_{\tau} \in \mc{L}_{i} \subseteq \cL_{\ell - 1}$, which only contains $U_\rho$ for $\rho \in \set{0, 1, \ldots, \ell - 2}^{\leq \ell - 1}$, so $\tau$ cannot possibly contain $\ell$. This yields a contradiction.
				\end{case}
				\begin{case}
					There is $n \geq k$ such that $n$ appears in $\tau$. Recall that $\ell \geq k + 1$. Since $\sigma \in \{0,1,\ldots,k-1\}^{\leq k}$, $\sigma$ cannot contain $n$ as any entry, and so $\tau$ cannot be an initial segment of $\sigma$. Since $\sigma$ and $\tau$ are compatible, we have $\sigma \prec \tau \prec \pi$. Thus $\sigma \ell \preceq \tau$. Since $U_{\tau}$ is contained in $Q_i$, a quantifier-free formula in $\cL_i$, it must be that $\tau \in \set{0, 1, \ldots, \ell - 2}^{\leq \ell - 1}$, since $i < \ell$. This yields a contradiction.
				\end{case}
				Thus we have shown that $U_{\tau} \in \cL_k$. We conclude that $Q_i$ has the same truth value in $\mc{B}$ as in $\cA$ and so axioms of the form $(2^*)$ are fulfilled. Since $\cB \models T_m^*$ for each $m$, $\cB \models S_\forall$, completing the proof of the subclaim.
			\end{proof}
			
			Since $\cB \models S_\forall$, standard model-theoretic results imply that there is $\cC \supseteq \cB$ with $\cC \models S$. Let $\cC^+$ be the Marker extension of $\cC$, so that $\cC^+ \models S^+$, with $\cA^+ \subseteq \cB^+ \subseteq \cC^+$. We have $\cC^+ \models \neg A_j(d)$. It remains to argue that $\cC^+ \models \varphi(\bar{c})$. Recall that in models of $S^+$, $\varphi$ is equivalent to a quantifier-free formula in the language $\mc{L}_k^+ \cup \{ \psi^* : \text{$\psi$ a quantifier-free $\mc{L}_k$-formula}\}$. So it suffices to show that $\cA^+$ is a substructure of $\cC^+$ in this language, as then since $\cA^+ \models \varphi(\barc)$, we can conclude that $\cC^+ \models \varphi(\barc)$. Since $\cA^+$ is an $\cL^+$-substructure of $\cC^+$, it remains to check that if a relation $\psi^*$ holds of some tuple in $\cA^+$ then it holds in $\cC^+$ for $\psi$ a quantifier-free $\cL_k$-formula. This follows from the fact that, after taking their reducts to $\cL_k$, $\cA$ is a substructure of $\cB$ and $\cB$ is a substructure of $\cC$. This completes the argument of the claim.
		\end{proof}
		
		This completes the proof of the theorem: If $\treeT$ is well-founded, then the theory $T^+$ constructed is relatively decidable, and if $\treeT$ is ill-founded then the theory $S^+ \supseteq T^+$ is not relatively decidable and hence $T^+$ is not relatively decidable.
	\end{proof}
	
	\bibliography{References}
	\bibliographystyle{alpha} 
	
\end{document}